\begin{document}

\title{Convergence Rate of Inertial Forward-Backward Splitting Algorithms Based on the Local Error Bound Condition}
\shorttitle{Convergence Rate of IFBS}

\author{%
	{\sc
		Hongwei Liu\thanks{Email: hwliuxidian@163.com},
		Ting Wang\thanks{Corresponding author. Email: wangting\_7640@163.com}
	} \\[2pt]
	School of Mathematics and Statistics, Xidian University, Xi'an, 710126,  China\\[6pt]
	{\sc and}\\[6pt]
	{\sc Zexian Liu}\thanks{Email: liuzexian2008@163.com}\\[2pt]
	State Key Laboratory of Scientific and Engineering Computing, Institute of Computational Mathematics and Scientific/Engineering computing, AMSS, Chinese Academy of Sciences, Beijing, 100190, China;\\
	School of Mathematics and Statistics, Guizhou  University, Guiyang, 550025, China
}
\shortauthorlist{Convergence Rate of IFBS}

\maketitle

\begin{abstract}	{The `` Inertial Forward-Backward algorithm '' (IFB) is a powerful tool for convex nonsmooth minimization problems, and under the local error bound condition, the $R$-linear convergence rates for the sequences of objective values and iterates have been proved if the inertial parameter $\gamma_k$ satisfies ${\sup _k}{\gamma _k} < 1.$ However, the convergence result for ${\sup _k}{\gamma _k} = 1$ is not know. In this paper, based on the local error bound condition, we exploit a new assumption condition for the important parameter $t_k$ in IFB, which implies that ${\lim _{k \to \infty }}{\gamma _k} = 1,$ and establish the convergence rate of function values and strong convergence of the iterates generated by the IFB algorithms with six $t_k$ satisfying the above assumption condition in Hilbert space. It is remarkable that, under the local error bound condition, we show that the IFB algorithms with some $t_k$ can achieve sublinear convergence rate of $o\left( {\frac{1}{{{k^p}}}} \right)$ for any positive integer $p>1$. In addition, we propose a class of Inertial Forward-Backward algorithm with adaptive modification and show it has same convergence results as IFB under the error bound condition. Some numerical experiments are conducted to illustrate our results.
}
{Inertial Forward-Backward algorithm; local error bound condition; rate of convergence.}
\end{abstract}

\section{Introduction}
\label{sec;introduction}

Let $H$ be a real Hilbert space.  $f:H \to \mathbb{R}$ be a smooth convex function and continuously differentiable with $L_f$-Lipschitz continuous gradient, and $g:H \to \mathbb{R} \cup \left\{ { + \infty } \right\}$ be a proper lower semi-continuous convex function. We also assume that the proximal operator of $\lambda g,$ i.e., 
\begin{equation}
{\rm{pro}}{{\rm{x}}_{\lambda g}}\left(  \cdot  \right)\mathop { = \arg \min }\limits_{x \in H} \left\{ {g\left( x \right) + \frac{1}{{2\lambda }}{{\left\| {x -  \cdot } \right\|}^2}} \right\} 
\end{equation}
can be easliy computed for all $\lambda >0.$
In this paper, we consider the following problem:
\begin{displaymath}
(P) \quad \quad \quad \quad \quad \mathop {\min }\limits_{x \in H} F\left( x \right) := f\left( x \right) + g\left( x \right).
\end{displaymath}
We assume that problem ($P$) is solvable, i.e., ${X}: = \arg \min F \ne \emptyset ,$ and for ${x_* } \in {X}$ we set ${F_* }: = F\left( {{x_* }} \right).$

In order to solve the problem ($P$), several algorithms have been proposed based on
the use of the proximal operator due to the non differentiable part. One can consult \citet{johnstone2017local}, \citet{IFB} and \citet{villa2013accelerated} for a recent account on the proximal-based algorithms that
play a central role in nonsmooth optimization. A typical optimization strategy for solving problem ($P$) is the Inertial Forward-Backward algorithm (IFB), which consists in applying iteratively at every point the non-expansive operator ${T_\lambda }:H \to H,$ defined as 
\[{T_\lambda }\left( x \right) := {\rm{pro}}{{\rm{x}}_{\lambda g}}\left( {x - \lambda \nabla f\left( x \right)} \right)\;\forall x \in H.\]
\begin{algorithm}[tbhp]
	\caption{ Inertial Forward-Backward algorithm (IFB) }		
	\hspace*{0.2cm} \textbf{Step 0.} Take ${y_1} = {x_0} \in {R^n},{t_1} = 1.$ Input $\lambda  = \frac{\mu }{{{L_f}}},$ where ${\mu } \in \left] {{\rm{0}},{\rm{1}}} \right[$.\\
	\hspace*{0.7cm} \textbf{Step k.}  Compute \\
	\hspace*{0.2cm} \hspace*{2cm} ${x_k} = {T_\lambda }\left( {{y_k}} \right) = {\rm{pro}}{{\rm{x}}_{\lambda g}}\left( {{y_k} - \lambda \nabla f\left( {{y_k}} \right)} \right)$\\
	\hspace*{0.2cm} \hspace*{2cm} ${y_{k + 1}} = {x_k} + {\gamma _k}\left( {{x_k} - {x_{k - 1}}} \right)$ where ${\gamma _k} = \frac{{{t_k} - 1}}{{{t_{k + 1}}}}.$
\end{algorithm}
In view of the composition of IFB, we can easily \textcolor{red}{find} that the inertial term ${\gamma _k}$ plays an important role for improving the speed of convergence of IFB.
Based on Nesterov's extrapolation \textcolor{red}{technique} \citep[see,][]{Nesterov}, Beck and Teboulle proposed a ``fast iterative shrinkage-thresholding algorithm'' (FISTA) with $t_1 =1$ and ${t_{k + 1}}{\rm{ = }}\frac{{{\rm{1 + }}\sqrt {{\rm{1 + 4}}t_k^2} }}{2}$ for solving ($P$) \citep[see,][]{FISTA}. The remarkable \textcolor{red}{properties} of this algorithm \textcolor{red}{are} the computational simplicity and the significantly better global rate of convergence of the function values, that is $F\left( {{x_k}} \right) - F\left( {{x_* }} \right) = O\left( {\frac{1}{{{k^2}}}} \right).$ Several variants of FISTA considered in works such as \citet{apidopoulos2020}, \citet{Convea}, \citet{Stepsize2019}, \citet{anotherLook}, \citet{appl}, \citet{O(1/k3)} and \citet{localCov}, the properties such as convergence of the iterates and rate of convergence of function values have also been studied. 

Chambolle and Dossal \citep[see,][]{Convea} pointed out that FISTA satisfies a better worst-case estimate, however,  the convergence of the iterates is not known. They proposed a new ${t_k} = \frac{{k - 1 + a}}{a}\left( {a > 2} \right)$ to show that the iterates generated by the corresponding IFB, named ``FISTA\_CD", converges weakly to the minimizer of $F$. 
Attouch and Peypouquet \citep[see,][]{cov_CD} further proved that the sequence of function values generated by FISTA\_CD \textcolor{red}{approximates} the optimal value of the problem with a rate that is strictly faster than $O\left( {\frac{1}{{{k^2}}}} \right),$ namely $F\left( {{x_k}} \right) - F\left( {{x_* }} \right) = o\left( {\frac{1}{{{k^2}}}} \right).$ 
Apidopoulos et al. \citep[see,][]{apidopoulos2020} noticed that the basic idea of the choices of $t_k$ in \citet{Attouch2018}, \citet{FISTA} and \citet{Convea} is the Nesterov's rule: $t_k^2 - t_{k + 1}^2 + {t_{k + 1}} \ge 0,$
and they \textcolor{red}{focused} on the case that the Nesterov's rule is not satisfied. They studied the ${\gamma _k} = \frac{n}{{n + b}}$ with $ {0 < b < 3}$ and found that the exact estimate bound is:
$F\left( {{x_k}} \right) - F\left( {{x_* }} \right) = O\left( {\frac{1}{{{k^{\frac{{2b}}{3}}}}}} \right)$. Attouch and Peypouquet \citep[see,][]{Attouch2018} considered various options of ${\gamma _k}$ to analyze the convergence rate of the function values and weak convergence of the iterates under the given assumptions. Further, they showed that the strong convergence of iterates can be satisfied for the special options of $f$.
Wen, Chen and Pong \citep[see,][]{wenbo} showed that for the nonsmooth convex minimization problem ($P$),  under the local error bound condition \citep[see,][]{Tseng}, the $R$-linear convergence of both the sequence $\left\{ {{x_k}} \right\}$ and the corresponding sequence of objective values $\left\{ {F\left( {{x_k}} \right)} \right\}$ can be satisfied if ${\sup _k}{\gamma _k} < 1;$ and they pointed out that the sequences $\left\{ {{x_k}} \right\}$ and $\left\{ {F\left( {{x_k}} \right)} \right\}$ generated by FISTA with fixed restart or both fixed and adaptive restart schemes \citep[see,][]{Restart} are $R$-linearly convergent under the error bound condition. However, the local convergence rate of the iterates generated by FISTA for solving ($P$) is still unknown, even under the local error bound condition.

The local error bound condition, which estimates the distance from $x$ to $X^*$ by the norm of the proximal residual at $x,$ \textcolor{red}{has} been proved to be extremely useful in analyzing the convergence rates of a host of iterative methods for solving optimization problems \citep[see,][]{zhou2017a}. Major contributions on developing and using error bound condition to derive convergence  results of iterative algorithms have been developed in a series of papers \citep[see, e.g.][]{hai2020error,Luo1992,necoara2019linear,Tseng,tseng2010,tseng2010approximation,zhou2017a}. Zhou and So \citep[see,][]{zhou2017a} established error bounds for minimizing the sum of a smooth convex function and a general closed proper convex function. Such a problem contains general constrained minimization problems and various regularized loss minimization formulations in machine learning, signal processing, and statistics.
There are many choices of $f$ and $g$ \textcolor{red}{satisfy} the local error bound condition, including: 
\begin{itemize}
	\item \textcolor{red}{\citep[Theorem 3.1]{Pang1987}} $f$ is strong convex, and $g$ is arbitrary.
	\item \textcolor{red}{\citep[Theorem 2.3]{Luo1992a}} $f$ is a quadratic function, and $g$ is a polyhedral function.
	\item \textcolor{red}{\citep[Theorem 2.1]{Luo1992}} $g$ is a polyhedral function and $f = h(Ax){\rm{ + }}\left\langle {c,x} \right\rangle ,$ where $A \in {R^{m \times n}},c \in {R^n}$ and $h$ is a continuous differentiable function with gradient Lipschitz continuous and strongly convex on any compact convex set. This covers the well-known LASSO.
	\item \textcolor{red}{\citep[Theorem 4.1]{Luo1993}} $g$ is a polyhedral function and $f\left( x \right) = \mathop {\max }\limits_{y \in Y} \left\{ {{{\left( {Ax} \right)}^T}y - h\left( y \right)} \right\} + {q^T}x,$ where $Y$ is a polyhedral set, $h$ is a strongly convex differentiable function with gradient Lipschitz continuous.
	\item \textcolor{red}{\citep[Theorem 2]{tseng2010approximation}} $f$ takes the form $f\left( x \right) = h\left( {Ax} \right),$ where $h$ is same as the above second item and $g$ is the grouped LASSO regularizer.
\end{itemize}
 More examples satisfying the error bound condition can be referred to \citet{tseng2010approximation}, \citet{zhou2017a}, \citet{Tseng}, \citet{Pang1987} and \citet{Luo1992}.

It has been observed numerically that first-order methods for solving those specific structured instances of problem ($P$) converge at a much faster rate than that suggested by the theory in \citet{localCov}, \citet{xiao2013a} and \citet{zhou2017a}. A very powerful approach to analyze this phenomenon is the local error bound condition. Hence, the first point this work focuses is the improved convergence rate of IFBs with some special $t_k$ under the local error bound condition.

We also pay attention to the Nesterov's rule: $t_k^2 - t_{k + 1}^2 + {t_{k + 1}} \ge 0.$ For the $t_k$ \textcolor{red}{satisfies} it, we can derive that ${t_{k + 1}} - {t_k} < 1$ and $\sum\limits_{k = 1}^{ + \infty } {\frac{1}{{{t_k}}}}$ is divergent, which will greatly limit the choice of $t_k.$ What \textcolor{red}{we} expect is whether we can find the more suitable $t_k$ and obtain the improved theoretical results if we replace the Nesterov's rule by some new we proposed.

\textbf{Contributions.}

In this paper, based on the local error bound condition, we exploit an assumption condition for the important parameter $t_k$ in IFB, and prove the convergence results including convergence rate of function values and strong convergence of iterates generated by the corresponding IFB. The above mentioned assumption condition imposed on $t_k$ \textcolor{red}{provides} a theoretical basis for choosing a new $t_k$ in IFB to solving those problems satisfying the local error bound condition, like LASSO.  We use a ``comparison methods'' to discuss six choices of $t_k$, which \textcolor{red}{include} the ones in original FISTA \citep[see,][]{FISTA} and FISTA\_CD \citep[see,][]{Convea} and \textcolor{red}{satisfy} our assumption condition, and separately show the improved convergence rates of the function values and establish the sublinear convergence of the iterates generated by corresponding IFBs. We also establish the same convergence results for IFB with an adaptive modification (IFB\_AdapM), which performs well in numerical experiments. It is remarkable that, under the local error bound condition, the strong convergence of the iterates generated by the original FISTA is established, the convergence rate of function values for FISTA\_CD is improved to $o\left( {\frac{1}{{{k^{2(a+1)}}}}} \right)$, and the IFB algorithms with some $t_k$ can achieve sublinear convergence rate  $o\left( {\frac{1}{{{k^p}}}} \right)$ for any positive integer $p>1$.

\section{ An new assumption condition for $t_k$ and the convergence of the corresponding IFB algorithms  }

In this section, we derive a new assumption condition for the $t_k$ in IFB, and analyze the convergence results of the corresponding IFB under the local error bound condition.

We start by recalling a key result, which plays an important role in our theoretical analysis. 
\begin{lemma}\label{l4}
	\textcolor{red}{\citep[ineq (4.36)]{Chambolle2016Pock}} For any $y \in {R^n},\lambda  = \frac{\mu }{{{L_f}}},$ where ${\mu } \in \left( {{\rm{0}},{\rm{1}}} \right]$, we have,
	\begin{equation}\label{O1}
	\forall x \in \mathbb{R}^n \quad F\left( {{T_\lambda }\left( y \right)} \right) \le F\left( x \right) + \frac{1}{{2\lambda }}{\left\| {x - y} \right\|^2} - \frac{{1 - \mu }}{{2\lambda }}{\left\| {{T_\lambda }\left( y \right) - y} \right\|^2} - \frac{1}{{2\lambda }}{\left\| {{T_\lambda }\left( y \right) - x} \right\|^2}. 
	\end{equation}
	
\end{lemma}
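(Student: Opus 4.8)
The plan is to combine the descent lemma for the smooth part $f$, the convexity of both $f$ and $g$, and the optimality condition characterizing the proximal step, and then to collapse the resulting inner product into squared norms via the three-point (polarization) identity. Throughout I write $p := T_\lambda(y)$ for brevity.

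First I would invoke the standard descent lemma for the $L_f$-Lipschitz gradient of $f$, namely $f(p) \le f(y) + \langle \nabla f(y), p - y \rangle + \frac{L_f}{2}\|p - y\|^2$, and use $\lambda = \mu/L_f$ to rewrite the last coefficient as $\frac{\mu}{2\lambda}$. Next, convexity of $f$ at the point $y$ gives $f(y) \le f(x) + \langle \nabla f(y), y - x \rangle$ for the arbitrary $x$; adding these two bounds and merging the inner products yields $f(p) \le f(x) + \langle \nabla f(y), p - x \rangle + \frac{\mu}{2\lambda}\|p - y\|^2$.

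The key structural input comes from the proximal operator. Since $p = {\rm prox}_{\lambda g}(y - \lambda \nabla f(y))$, its optimality condition produces a subgradient $\xi \in \partial g(p)$ with $\xi = -\frac{1}{\lambda}(p - y) - \nabla f(y)$. Convexity of $g$ then gives $g(p) \le g(x) + \langle \xi, p - x \rangle$. Substituting $\xi$ and adding the bound for $f(p)$, the two occurrences $\pm \langle \nabla f(y), p - x \rangle$ cancel exactly, leaving $F(p) \le F(x) + \frac{\mu}{2\lambda}\|p - y\|^2 - \frac{1}{\lambda}\langle p - y, p - x \rangle$.

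The final step is to rewrite the inner product using the identity $\langle p - y, p - x \rangle = \frac{1}{2}\bigl(\|p - y\|^2 + \|p - x\|^2 - \|x - y\|^2\bigr)$, which holds because $(p - y) - (p - x) = x - y$. Distributing the factor $-\frac{1}{\lambda}$ and regrouping the $\|p - y\|^2$ terms via $\frac{\mu}{2\lambda} - \frac{1}{2\lambda} = -\frac{1 - \mu}{2\lambda}$ gives precisely \eqref{O1}. The calculation is essentially routine once the ingredients are assembled; the only place demanding care is extracting and correctly signing the subgradient $\xi$ from the prox optimality condition, since an error there would spoil the cancellation of the $\nabla f(y)$ terms that makes the clean three-point estimate possible.
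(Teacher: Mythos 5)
Your derivation is correct: the descent lemma with $L_f=\mu/\lambda$, convexity of $f$ at $y$, the prox optimality condition giving $\xi=-\frac{1}{\lambda}(p-y)-\nabla f(y)\in\partial g(p)$, and the three-point identity combine exactly as you describe to yield \eqref{O1}. The paper gives no proof of its own here — it simply cites Chambolle and Pock's inequality (4.36) — and your argument is precisely the standard derivation underlying that cited result, so there is nothing to contrast.
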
	

Next, we give a very weak assumption to show that the sequence $\left\{ {F\left( {{x_k}} \right)} \right\},$ which is generated by Algorithm 1 with $0 \le \gamma _k \le 1$ for $k$ is large sufficiently, converges to $F\left( {{x_* }} \right)$ \textcolor{red}{independent} on $t_k.$

\textbf{Assumption $A_0:$} For any $\xi_0  \ge {F^ * },$ there exist $\epsilon_0 > 0$ and ${ \tau_0 } > 0$ such that 
\begin{equation}
{\rm{dist}}\left( {x,{X^*}} \right) \le {\tau _{\rm{0}}}
\end{equation}
whenever $\left\| {{T_{\frac{1}{{{L_f}}}}}\left( x \right) - x} \right\| < {\varepsilon _0}$ and $F\left( x \right) \le \xi_0 .$

\emph{Remark 2.} Note that Assumption $A_0$ can be derived by the assumption that $F$ is boundedness of level sets. 
\begin{lemma}\label{L3.5}
	\textcolor{red}{\citep[Lemma 2]{Nesterov2013}} For ${\lambda_1} \ge {\lambda_2} > 0,$ we have 
	\begin{equation}\label{F49}
	\forall x \in \mathbb{R}^n \quad \left\| {{T_{{\lambda _1}}}\left( x \right) - x} \right\| \ge \left\| {{T_{{\lambda _2}}}\left( x \right) - x} \right\|\quad {\rm{and}}\quad \frac{{\left\| {{T_{{\lambda _1}}}\left( x \right) - x} \right\|}}{{{\lambda _1}}} \le \frac{{\left\| {{T_{{\lambda _2}}}\left( x \right) - x} \right\|}}{{{\lambda _2}}}.
	\end{equation}
\end{lemma}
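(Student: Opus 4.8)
The plan is to reduce the claim, which superficially involves the full forward--backward operator $T_\lambda$, to a monotonicity property of the ordinary proximal map of a \emph{single} fixed convex function. The key observation is that for a fixed $x$ the forward step $x-\lambda\nabla f(x)$ and the backward step both scale with the same $\lambda$, so completing the square inside the definition of $T_\lambda$ lets one absorb the gradient term into a linear perturbation of $g$. Concretely, setting $\phi(u):=g(u)+\langle \nabla f(x),u\rangle$, which is proper, lsc and convex, a direct expansion of $\frac{1}{2\lambda}\|u-(x-\lambda\nabla f(x))\|^2$ shows that the $\lambda$-dependent, $u$-independent remainder terms drop out of the minimization over $u$, giving $T_\lambda(x)=\operatorname{prox}_{\lambda\phi}(x)$. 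Thus varying the step size $\lambda$ in $T_\lambda$ is exactly varying the proximal parameter for the one convex function $\phi$, and the dependence on the splitting $f,g$ becomes irrelevant.

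First I would write the first-order optimality conditions for the two prox points. Fixing $\lambda_1\ge\lambda_2>0$ and putting $u_i:=T_{\lambda_i}(x)=\operatorname{prox}_{\lambda_i\phi}(x)$, optimality gives $v_i:=\frac{1}{\lambda_i}(x-u_i)\in\partial\phi(u_i)$, so that $u_i=x-\lambda_i v_i$ and $\|T_{\lambda_i}(x)-x\|=\lambda_i\|v_i\|$, while $\frac{\|T_{\lambda_i}(x)-x\|}{\lambda_i}=\|v_i\|$. The two assertions of the lemma therefore become, respectively, $\lambda_1\|v_1\|\ge\lambda_2\|v_2\|$ and $\|v_1\|\le\|v_2\|$.

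Next I would invoke monotonicity of the subdifferential of the convex function $\phi$: since $v_1\in\partial\phi(u_1)$ and $v_2\in\partial\phi(u_2)$, we have $\langle v_1-v_2,\,u_1-u_2\rangle\ge0$. Substituting $u_1-u_2=\lambda_2 v_2-\lambda_1 v_1$ and rearranging yields $(\lambda_1+\lambda_2)\langle v_1,v_2\rangle\ge\lambda_1\|v_1\|^2+\lambda_2\|v_2\|^2$, and then Cauchy--Schwarz gives $(\lambda_1+\lambda_2)\|v_1\|\,\|v_2\|\ge\lambda_1\|v_1\|^2+\lambda_2\|v_2\|^2$. Writing $a:=\|v_1\|$ and $b:=\|v_2\|$, this is precisely the factorised inequality $(a-b)(\lambda_1 a-\lambda_2 b)\le0$.

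Finally I would close the argument by a short sign analysis of $(a-b)(\lambda_1 a-\lambda_2 b)\le0$ using $\lambda_1\ge\lambda_2>0$ and $a,b\ge0$. If $a>b$ then $\lambda_1 a\ge\lambda_2 a>\lambda_2 b$, so both factors are positive, contradicting the inequality; hence $a\le b$, which is the second claim $\|v_1\|\le\|v_2\|$. Given $a\le b$ the first factor is $\le0$, forcing $\lambda_1 a-\lambda_2 b\ge0$, i.e. $\lambda_1\|v_1\|\ge\lambda_2\|v_2\|$, the first claim. The only real subtlety---the main obstacle---is the initial reduction $T_\lambda(x)=\operatorname{prox}_{\lambda\phi}(x)$; once that reparametrisation is in hand the result is a standard nonexpansiveness/monotonicity fact about proximal maps and everything else is elementary. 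I would also remark that nothing in the argument uses finite dimensionality, so the statement holds verbatim in the Hilbert space $H$.
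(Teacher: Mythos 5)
Your proof is correct, and it is genuinely more self-contained than what the paper does: the paper's entire ``proof'' of this lemma is a one-line citation to Lemma~2 of Nesterov (2013), obtained by substituting $B:=I$ and $L:=\frac{1}{\lambda}$ into that result, whereas you reconstruct the argument from first principles. Your reduction $T_\lambda(x)=\operatorname{prox}_{\lambda\phi}(x)$ with $\phi(u)=g(u)+\langle\nabla f(x),u\rangle$ is valid (the $u$-independent terms $-\langle x,\nabla f(x)\rangle$ and $\frac{\lambda}{2}\|\nabla f(x)\|^2$ indeed drop out of the argmin), and it is essentially the same normalization Nesterov uses internally; from there the optimality conditions $v_i=\frac{1}{\lambda_i}(x-u_i)\in\partial\phi(u_i)$, monotonicity of $\partial\phi$, Cauchy--Schwarz, and the factorization $(a-b)(\lambda_1 a-\lambda_2 b)\le 0$ deliver both inequalities cleanly. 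What your approach buys is transparency and independence from the cited reference (and, as you note, validity in a general Hilbert space, which matters since the paper states the lemma over $\mathbb{R}^n$ but works in $H$ elsewhere); what the paper's approach buys is brevity. One cosmetic remark: in the final step, when $a=b$ the inequality $(a-b)(\lambda_1 a-\lambda_2 b)\le 0$ does not by itself ``force'' $\lambda_1 a-\lambda_2 b\ge 0$, but in that case $\lambda_1 a-\lambda_2 b=(\lambda_1-\lambda_2)a\ge 0$ holds trivially, so the conclusion is unaffected.
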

\begin{proof}
	\textcolor{red}{Above lemma can be obtained from Lemma 2 of \citet{Nesterov2013} with $B:=I,$ $L: = \frac{1}{\lambda}.$ }
\end{proof}
\begin{theorem}\label{T2.0}
	Let $\left\{ {{x_k}} \right\},$ $\left\{ {{y_k}} \right\}$ be generated by Algorithm 1. Suppose that Assumption $A_0$ holds and there exists a positive interger $n_0$ such that for $k \ge n_0,$ $0 \le \gamma _k \le 1.$ Then, \\
	1) $\sum\limits_{k = 1}^\infty  {{{\left\| {{x_{k + 1}} - {y_{k + 1}}} \right\|}^2}}$ is convergent.\\
	2) $\mathop {\lim }\limits_{k \to \infty } F\left( {{x_k}} \right) = F\left( {{x^ * }} \right).$
\end{theorem}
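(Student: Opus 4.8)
The plan is to run the classical FISTA-style energy analysis driven by the descent estimate of Lemma~\ref{l4}, applied at two different comparison points, and then to upgrade the resulting summability into convergence of the objective using Assumption $A_0$. Throughout I would exploit that $\mu<1$, so $1-\mu>0$.

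For part 1, I would fix $k\ge n_0+1$ and apply Lemma~\ref{l4} with $y=y_k$ (so that $T_\lambda(y_k)=x_k$) and $x=x_{k-1}$. Since $y_k=x_{k-1}+\gamma_{k-1}(x_{k-1}-x_{k-2})$, we have $x_{k-1}-y_k=-\gamma_{k-1}(x_{k-1}-x_{k-2})$, hence $\|x_{k-1}-y_k\|^2=\gamma_{k-1}^2\|x_{k-1}-x_{k-2}\|^2$. Substituting this and introducing the Lyapunov energy $E_k:=F(x_k)+\frac{1}{2\lambda}\|x_k-x_{k-1}\|^2$, the two $\frac{1}{2\lambda}\|x_k-x_{k-1}\|^2$ contributions cancel, and I expect to obtain
\[
E_k-E_{k-1}\le \frac{\gamma_{k-1}^2-1}{2\lambda}\|x_{k-1}-x_{k-2}\|^2-\frac{1-\mu}{2\lambda}\|x_k-y_k\|^2 .
\]
Here the hypothesis $0\le\gamma_k\le1$ for $k\ge n_0$ is exactly what forces $\gamma_{k-1}^2-1\le 0$, so $\{E_k\}$ is nonincreasing for $k\ge n_0$; being bounded below by $F^*$ it converges. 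Summing the displayed inequality and discarding the remaining nonpositive terms yields $\frac{1-\mu}{2\lambda}\sum_k\|x_k-y_k\|^2\le E_{n_0}-F^*<\infty$, which gives part 1 after a trivial index shift.

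For part 2 I would first transfer the smallness of the proximal residual from $y_k$ to $x_k$. From part 1, $\|x_k-y_k\|\to 0$; Lemma~\ref{L3.5} with $\lambda_1=1/L_f\ge\lambda_2=\lambda$ gives $\|T_{1/L_f}(y_k)-y_k\|\le\frac1\mu\|x_k-y_k\|\to 0$, and since $T_{1/L_f}$ is nonexpansive the map $x\mapsto T_{1/L_f}(x)-x$ is $2$-Lipschitz, whence $\|T_{1/L_f}(x_k)-x_k\|\le 2\|x_k-y_k\|+\|T_{1/L_f}(y_k)-y_k\|\to 0$. The energy bound also gives $F(x_k)\le E_{n_0}=:\xi_0$ for all $k\ge n_0$, with $\xi_0\ge F^*$, so Assumption $A_0$ applies at $x_k$ for all large $k$ and yields $\mathrm{dist}(x_k,X^*)\le\tau_0$; writing $z_k:=P_{X^*}(x_k)$ we get $\|x_k-z_k\|\le\tau_0$. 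Applying Lemma~\ref{l4} once more with $y=y_k$ and $x=z_k$ (so $F(z_k)=F^*$) then gives $F(x_k)-F^*\le\frac{1}{2\lambda}\bigl(\|z_k-y_k\|^2-\|x_k-z_k\|^2\bigr)$.

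The crux, and the step I expect to be the main obstacle, is that $A_0$ only supplies a \emph{fixed} bound $\tau_0$ rather than a vanishing one, so the bracket $\|z_k-y_k\|^2-\|x_k-z_k\|^2$ cannot be controlled termwise by $\tau_0$ alone. The resolution is to factor it as a difference of squares and use the reverse triangle inequality $\big|\|z_k-y_k\|-\|z_k-x_k\|\big|\le\|x_k-y_k\|$, which gives
\[
\|z_k-y_k\|^2-\|x_k-z_k\|^2\le\|x_k-y_k\|\,\bigl(\|z_k-y_k\|+\|z_k-x_k\|\bigr)\le\|x_k-y_k\|\,(2\tau_0+\|x_k-y_k\|).
\]
Since $\|x_k-y_k\|\to 0$ while $\tau_0$ stays bounded, the right-hand side vanishes, and combined with $F(x_k)\ge F^*$ this forces $F(x_k)\to F^*$, completing part 2.
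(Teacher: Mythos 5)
Your proposal is correct and follows essentially the same route as the paper: the same Lyapunov energy $F(x_k)+\frac{1}{2\lambda}\|x_k-x_{k-1}\|^2$ obtained from Lemma~\ref{l4} gives part 1, and part 2 uses the same combination of Lemma~\ref{L3.5}, nonexpansiveness of the prox, and Assumption $A_0$ to get $\mathrm{dist}(x_k,X^*)\le\tau_0$ before applying Lemma~\ref{l4} at the projection point. Your final difference-of-squares estimate is just an algebraically equivalent form of the paper's expansion $\frac{1}{2\lambda}\|y_{k+1}-x_{k+1}\|^2+\frac1\lambda\langle y_{k+1}-x_{k+1},x_{k+1}-x_{k+1}^*\rangle\le\frac{1}{2\lambda}\|y_{k+1}-x_{k+1}\|^2+\frac1\lambda\|y_{k+1}-x_{k+1}\|\,\mathrm{dist}(x_{k+1},X)$, so the ``crux'' you flag is resolved identically in both arguments.
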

\begin{proof} 
	Applying the inequality (\ref{O1}) at the point $x={x_k},\;y={y_{k + 1}},$ we obtain 
	\begin{equation}\label{O45}
	\forall k \ge 1 \quad \frac{{1 - \mu }}{{2\lambda }}{\left\| {{x_{k + 1}} - {y_{k + 1}}} \right\|^2} \le \left( {F\left( {{x_k}} \right) + \frac{{\gamma _k^2}}{{2\lambda }}{{\left\| {{x_k} - {x_{k - 1}}} \right\|}^2}} \right) - \left( {F\left( {{x_{k + 1}}} \right) + \frac{1}{{2\lambda }}{{\left\| {{x_{k + 1}} - {x_k}} \right\|}^2}} \right).
	\end{equation}
	Then, we can easily obtain $\sum\nolimits_{k = {n_0}}^\infty  {{{\left\| {{x_{k + 1}} - {y_{k + 1}}} \right\|}^2}}  <  + \infty $ since that $0 \le \gamma _k \le 1$ holds for any $k \ge n_0.$ Then, result 1) can be obtained since that increasing the finite term does not change the convergence of the series. Moreover, for any $\bar \epsilon > 0,$ there exists a $n_1,$ which is sufficiently large, such that for any $k \ge \bar n:=\max \left( {{n_0},{n_1}} \right),$ $\left\| {{x_k} - {y_k}} \right\| < \bar \epsilon .$
	Setting ${\xi _0} = F\left( {{x_{{\bar n} + 1}}} \right) + \frac{1}{{2\lambda }}{\left\| {{x_{{\bar n} + 1}} - {x_{{\bar n}}}} \right\|^2}.$
	From Lemma \ref{L3.5} with $\lambda  < \frac{1}{{{L_f}}}$ and the nonexpansiveness property of the proximal operator, we obtain that 
	\begin{equation}\label{O47}
	\left\| {{T_{\frac{1}{{{L_f}}}}}\left( {{x_k}} \right) - {x_k}} \right\| \le \frac{1}{{\lambda {L_f}}}\left\| {{T_{\lambda}}\left( {{x_k}} \right) - {x_k}} \right\|
	= \frac{1}{{\lambda {L_f}}}\left\| {{T_{\lambda}}\left( {{x_k}} \right) - {T_{\lambda}}\left( {{y_k}} \right)} \right\| \le \left( {1 + \frac{1}{{\lambda {L_f}}}} \right)\left\| {{x_k} - {y_k}} \right\|,
	\end{equation}
	hence, $\left\| {{T_{\frac{1}{{{L_f}}}}}\left( {{x_k}} \right) - {x_k}} \right\| < \left( {1 + \frac{1}{{\lambda {L_f}}}} \right)\bar \epsilon$ for any $k \ge \bar n.$ Also,
	it follows from (\ref{O45}) that for any $k \ge \bar n,$ $\left\{ {F\left( {{x_{k + 1}}} \right) + \frac{1}{{2\lambda }}{{\left\| {{x_{k + 1}} - {x_k}} \right\|}^2}} \right\}$ is non-increasing, then, $F\left( {{x_k}} \right) \le {\xi _0}.$ Hence, combining with the Assumption $A_0$, we have for ${\xi _0} = F\left( {{x_{{\bar n} + 1}}} \right) + \frac{1}{{2\lambda }}{\left\| {{x_{{\bar n} + 1}} - {x_{{\bar n}}}} \right\|^2},$ there exist ${\epsilon _0}: = \left( {1 + \frac{1}{{\lambda {L_f}}}} \right)\bar \epsilon $ and ${ \tau_0 } > 0,$ such that 
	\begin{equation}\label{O46}
	\forall k \ge \bar n, \quad {\rm{dist}}\left( {{x_k},{X^ * }} \right) \le \tau_0.
	\end{equation}
	In addition, applying the inequality (\ref{O1}) at the point $y={y_{k + 1}},$ and $x$ be an $x_{k+1}^* \in {X }$ such that ${\rm{dist}}\left( {{x_{k+1}},{X }} \right) = \left\| {{x_{k+1}} - x_{k+1}^*} \right\|,$ we obtain
	\begin{align}
	&F\left( {{x_{k + 1}}} \right) - F\left( {{x_*}} \right) \label{O48} \\
	&\le \frac{1}{{2\lambda }}{\left\| {{y_{k + 1}} - x_{k + 1}^*} \right\|^2} - \frac{1}{{2\lambda }}{\left\| {{x_{k + 1}} - x_{k + 1}^*} \right\|^2} \nonumber\\
	&= \frac{1}{{2\lambda }}{\left\| {{y_{k + 1}} - {x_{k + 1}}} \right\|^2} + \frac{1}{\lambda }\left\langle {{y_{k + 1}} - {x_{k + 1}},{x_{k + 1}} - x_{k + 1}^*} \right\rangle  \nonumber\\
	&\le \frac{1}{{2\lambda }}{\left\| {{y_{k + 1}} - {x_{k + 1}}} \right\|^2} + \frac{1}{\lambda }\left\| {{y_{k + 1}} - {x_{k + 1}}} \right\|{\rm{dist}}\left( {{x_{k + 1}},{X}} \right).\nonumber 
	\end{align} 
	Then, combining with $\left\| {{y_{k + 1}} - {x_{k + 1}}} \right\| \to 0$ by result 1) and (\ref{O46}), we have $\mathop {\lim }\limits_{k \to \infty } F\left( {{x_k}} \right) = F\left( {{x_* }} \right).$ 
\end{proof}

The rest of this paper is based on the following assumption.

\textbf{Assumption $A_1:$}  (``Local error bound condition", \citet{Tseng}) For any $\xi  \ge {F_* },$ there exist $\varepsilon  > 0$ and ${\bar \tau } > 0$ such that 
\begin{equation}
{\rm{dist}}\left( {x,{X^*}} \right) \le \bar \tau \left\| {{T_{\frac{1}{{{L_f}}}}}\left( x \right) - x} \right\|
\end{equation}
whenever $\left\| {{T_{\frac{1}{{{L_f}}}}}\left( x \right) - x} \right\| < \varepsilon $ and $F\left( x \right) \le \xi .$

As mentioned in Section 1, the $t_k$ in FISTA accelerates convergence rate from $O\left( {\frac{1}{k}} \right)$ to $O\left( {\frac{1}{{{k^2}}}} \right)$ for the function values and $t_k$ in FISTA\_CD improves the convergence rate to $o\left( {\frac{1}{{{k^2}}}} \right).$ Other options for $t_k$ are considered in \citet{Attouch2018} and \citet{apidopoulos2020}. Hence, we see that $t_k$ is the crucial factor to guarantee the convergence of the iterates or to improve the rate of convergence for the function values. Apidopoulos et al. in \textcolor{red}{\citet{apidopoulos2020}} points that if $t_k$ satisfies the Nesterov's rule, then one can obtain a better convergence rate. However, we notice that the Nesterov's rule will \textcolor{red}{limit} the choice of $t_k$ greatly. In the following, we present a new Assumption $A_2$ for $t_k,$ which \textcolor{red}{helps} us to obtain some new options of $t_k,$ and analyze the convergence of iterates and convergence rate of the function values for the Algorithm 1 with a class of abstract $t_k$ satisfied Assumption $A_2$ under the local error bound condition. 

\textbf{Assumption $A_2:$} There exists a positive constant $0 < \sigma  \le 1$ such that $\mathop {\lim }\limits_{k \to \infty } k^\sigma\left( {\frac{{{t_{k + 1}}}}{{{t_k}}} - 1} \right) = c,$ where $c>0.$

\emph{Remark 3.} It follows that ${\gamma _k} \in \left] {0,1} \right[,$ for any $k$ is sufficiently large, ${\mathop {\lim }\limits_{k \to \infty } {t_k} =  + \infty }$ and ${\mathop {\lim }\limits_{k \to \infty } \frac{{{t_{k + 1}}}}{{{t_k}}} = 1}$ from Assumptions $A_2$. (It is easy to verify that $t_k$ in FISTA and $t_k$ in FISTA\_CD both satisfy the Assumption $A_2$ by choosing $\sigma = 1$, also, we can see that there exist some $t_k$, which satisfy or do not satisfy Nesterov's rule, satisfy Assumption $A_2$ (See Section 3))

\begin{lemma}\label{l3}
	Suppose that Assumptions ${A_1}$ and $A_2$ hold. Let $\left\{ {{x_k}} \right\}$ be generated by Algorithm 1 and ${x_*} \in {X }.$ Then, there exists a constant $\tau _1 >0$ such that \[ \forall k \ge 1,\quad F\left( {{x_{k + 1}}} \right) - F\left( {{x_*}} \right) \le \frac{{{\tau _1}}}{\lambda }{\left\| {{y_{k + 1}} - {x_{k + 1}}} \right\|^2}.\]
\end{lemma}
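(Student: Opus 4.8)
The plan is to insert the error-bound estimate into the inequality (\ref{O48}) already established in the proof of Theorem \ref{T2.0}. That inequality holds for every $k\ge 1$ and reads
\[
F(x_{k+1}) - F(x_*) \le \frac{1}{2\lambda}\|y_{k+1}-x_{k+1}\|^2 + \frac{1}{\lambda}\|y_{k+1}-x_{k+1}\|\,\mathrm{dist}(x_{k+1},X),
\]
so the entire task reduces to bounding $\mathrm{dist}(x_{k+1},X)$ by a constant multiple of $\|y_{k+1}-x_{k+1}\|$.

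First I would verify that the hypotheses of Theorem \ref{T2.0} are in force. Assumption $A_1$ implies Assumption $A_0$ (given $\xi_0$, apply $A_1$ with $\xi=\xi_0$ and take $\tau_0=\bar\tau\varepsilon$), and by the remark following Assumption $A_2$ we have $\gamma_k\in\left]0,1\right[$ for all sufficiently large $k$. Hence Theorem \ref{T2.0} yields $\|y_{k+1}-x_{k+1}\|\to 0$ and $F(x_k)\to F_*$.

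Next, for all large $k$ both conditions triggering the local error bound are satisfied: fixing $\xi=F_*+1$, the convergence $F(x_k)\to F_*$ gives $F(x_{k+1})\le\xi$ eventually, while (\ref{O47}) together with $\|x_k-y_k\|\to 0$ forces the residual $\|T_{1/L_f}(x_{k+1})-x_{k+1}\|$ below the threshold $\varepsilon$. Assumption $A_1$ and then (\ref{O47}) therefore give
\[
\mathrm{dist}(x_{k+1},X)\le\bar\tau\,\|T_{1/L_f}(x_{k+1})-x_{k+1}\|\le\bar\tau\Bigl(1+\tfrac{1}{\lambda L_f}\Bigr)\|y_{k+1}-x_{k+1}\|.
\]
Substituting into the displayed estimate proves the claim for all large $k$ with $\tau_1=\tfrac12+\bar\tau\bigl(1+\tfrac{1}{\lambda L_f}\bigr)$.

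The main remaining obstacle is to cover the finitely many small indices $k$ for which the error bound may fail because the residual is not yet below $\varepsilon$. The delicate case is $y_{k+1}=x_{k+1}$: then $x_{k+1}$ is a fixed point of $T_\lambda$, hence a minimizer of $F$, so $F(x_{k+1})-F(x_*)=0$ and the inequality holds trivially for any constant. For each of the remaining finitely many indices we have $\|y_{k+1}-x_{k+1}\|>0$, so the finite ratio $\lambda\bigl(F(x_{k+1})-F_*\bigr)/\|y_{k+1}-x_{k+1}\|^2$ is well defined; enlarging $\tau_1$ to dominate these finitely many ratios extends the estimate to all $k\ge 1$.
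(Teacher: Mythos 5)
Your proposal is correct and follows essentially the same route as the paper: it combines the bound (\ref{O48}) with the local error bound and the residual estimate (\ref{O47}) to get the inequality with $\tau_1=\tfrac12+\bar\tau\bigl(1+\tfrac{1}{\lambda L_f}\bigr)$ for all large $k$, then enlarges $\tau_1$ to absorb the finitely many initial indices. In fact your treatment of those initial indices is slightly more careful than the paper's, which merely asserts the existence of the constant $c$ without noting, as you do, that the case $y_{k+1}=x_{k+1}$ forces $x_{k+1}$ to be a fixed point of $T_\lambda$ and hence a minimizer, so the ratio is well defined whenever it is needed.
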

\begin{proof} 
	Since ${\gamma _k} \in \left] {0,1} \right[$ holds for $\forall k \ge n_0$ by Assumption $A_2,$ then, similar with the proof of Theorem \ref{T2.0}, for ${\xi _0} = F\left( {{x_{{\bar n} + 1}}} \right) + \frac{1}{{2\lambda }}{\left\| {{x_{{\bar n} + 1}} - {x_{{\bar n}}}} \right\|^2},$ there exist ${\epsilon _0}: = \left( {1 + \frac{1}{{\lambda {L_f}}}} \right)\bar \epsilon $ and ${ \tau_0 } > 0,$ such that
	\begin{equation}\label{O12}
	{\rm{dist}}\left( {{x_k},{X^*}} \right) \le \bar \tau \left\| {{T_{\frac{1}{{{L_f}}}}}\left( {{x_k}} \right) - {x_k}} \right\| \le \bar \tau \left( {1 + \frac{1}{{\lambda {L_f}}}} \right)\left\| {{x_k} - {y_k}} \right\| \le \frac{{2\bar \tau }}{\mu }\left\| {{x_k} - {y_k}} \right\|,
	\end{equation}
	where the second inequality of (\ref{O12}) follows from (\ref{O47}) and the third one follows the fact that $\lambda  = \frac{\mu }{{{L_f}}}$ with $\mu  \in \left( {0,1} \right).$ In addition, it follows from (2.7) that
	\begin{multline}\label{O34}
	F\left( {{x_{k + 1}}} \right) - F\left( {{x_*}} \right) \le \frac{1}{{2\lambda }}{{\left\| {{y_{k + 1}} - {x_{k + 1}}} \right\|}^2} + \frac{1}{\lambda }\left\| {{y_{k + 1}} - {x_{k + 1}}} \right\|{\rm{dist}}\left( {{x_{k + 1}},{X}} \right) \\
	\le \frac{1}{\lambda }\left( {\frac{{2\bar \tau }}{\mu } + \frac{1}{2}} \right){{\left\| {{y_{k + 1}} - {x_{k + 1}}} \right\|}^2}, \quad \forall k \ge \bar n.
	\end{multline}
	\textcolor{red}{Also, we can find a constant $c>0$ such that for $\forall 1\le k \le {\bar n} -1,$ $F\left( {{x_{k + 1}}} \right) - F\left( {{x_*}} \right) \le \frac{c}{\lambda }{\left\| {{y_{k + 1}} - {x_{k + 1}}} \right\|^2}.$
	Therefore, there exists a ${\tau _1} \ge \max \left\{ {\frac{{2\bar \tau }}{\mu } + \frac{1}{2},c} \right\}$ such that the conclusion holds.} 
\end{proof}

Here, we introduce a new way, which we called ``comparison method'', that \textcolor{red}{considers} a sequence $\left\{ {{\alpha _k}} \right\}$ such that ${\alpha _k} = \frac{{{s_k} - 1}}{{{s_{k + 1}}}} \ge {\gamma _k},$ where $\left\{ {{s_k}} \right\}$ is a nonnegative sequence, to estimate the bounds of objective function and the local variation of the iterates.

\begin{lemma}\label{L2}
	Suppose that there exists a nonnegative sequence $\left\{ {{s_k}} \right\}$ such that ${\alpha _k} = \frac{{{s_k} - 1}}{{{s_{k + 1}}}} \ge {\gamma _k},$ for $k$ is sufficiently large, and ${\gamma _k} = \frac{{{t_k} - 1}}{{{t_{k + 1}}}},$ $t_k$ satisfies the Assumption $A_2.$ Then, we have $\mathop {\lim }\limits_{k \to \infty } {s_k} =  + \infty ,$ and $\mathop {\lim \sup }\limits_{k \to \infty } \frac{{s_{k + 1}^2 - s_k^2}}{{s_k^2}} \le 0.$
\end{lemma}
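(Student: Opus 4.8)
The plan is to extract from the hypothesis $\alpha_k \ge \gamma_k$ the single structural inequality $s_k \ge 1 + \gamma_k s_{k+1}$ and to combine it with the facts, recorded in Remark 3 and following from $A_2$, that $\gamma_k \in (0,1)$ for all large $k$ and $\gamma_k \to 1$. Since $s_{k+1} > 0$, the hypothesis $\frac{s_k-1}{s_{k+1}} \ge \gamma_k$ is equivalent to $s_k \ge 1 + \gamma_k s_{k+1}$. Iterating this backward and discarding the nonnegative tail term $\left(\prod_{i=k}^{k+n-1}\gamma_i\right) s_{k+n} \ge 0$ yields $s_k \ge \sum_{j=0}^{n-1}\prod_{i=k}^{k+j-1}\gamma_i$ for every $n$ (empty product equal to $1$), and hence $s_k \ge \sum_{j=0}^\infty \prod_{i=k}^{k+j-1}\gamma_i$. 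Setting $\beta_k := \inf_{l\ge k}\gamma_l$, every factor obeys $\gamma_i \ge \beta_k$ for $i \ge k$, so the series dominates a geometric one: $s_k \ge \sum_{j=0}^\infty \beta_k^{\,j} = \frac{1}{1-\beta_k}$, which is legitimate because $\beta_k \le \gamma_k < 1$. As $\gamma_l \to 1$ forces $\beta_k \to 1^-$, the bound $\frac{1}{1-\beta_k} \to +\infty$, giving $\mathop{\lim}\limits_{k\to\infty} s_k = +\infty$.

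For the second assertion I would use the same inequality in the opposite direction. From $s_k \ge 1 + \gamma_k s_{k+1}$ I obtain $s_{k+1} \le \frac{s_k-1}{\gamma_k}$, so that $\frac{s_{k+1}}{s_k} \le \frac{1}{\gamma_k}\left(1 - \frac{1}{s_k}\right)$ for all large $k$ (using $s_k \ge 1$). Squaring and subtracting $1$ gives $\frac{s_{k+1}^2 - s_k^2}{s_k^2} \le \frac{1}{\gamma_k^2}\left(1 - \frac{1}{s_k}\right)^2 - 1$. By the first part $s_k \to +\infty$, and by Remark 3 $\gamma_k \to 1$, so the right-hand side tends to $0$; passing to the $\limsup$ yields $\mathop{\lim\sup}\limits_{k\to\infty}\frac{s_{k+1}^2 - s_k^2}{s_k^2} \le 0$.

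The delicate point is the first part, proving $s_k \to +\infty$ with no closed form for $s_k$. The decisive observation is that, although one application of $s_k \ge 1 + \gamma_k s_{k+1}$ only relates neighbouring terms, the fully telescoped series is bounded below by a geometric series whose ratio $\beta_k$ approaches $1$, forcing the lower bound $\frac{1}{1-\beta_k}$ to diverge. Care is needed only to verify that discarding the tail preserves the inequality and that $\beta_k < 1$ so the geometric comparison is valid; no finer information from $A_2$ (neither the exponent $\sigma$ nor the constant $c$) is required beyond $\gamma_k \in (0,1)$ and $\gamma_k \to 1$.
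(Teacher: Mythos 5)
Your proof is correct. The second assertion is handled essentially as in the paper: both arguments rewrite $\frac{s_k-1}{s_{k+1}}\ge\gamma_k$ as a bound on $\frac{s_{k+1}}{s_k}$ (you use $\frac{s_{k+1}}{s_k}\le\frac{1}{\gamma_k}\left(1-\frac{1}{s_k}\right)$, the paper uses $\frac{s_{k+1}}{s_k}\le\frac{1}{\gamma_k+\frac{1}{s_{k+1}}}$), square, and let $\gamma_k\to 1$ and $s_k\to+\infty$ do the work. Where you genuinely diverge from the paper is the first assertion, $s_k\to+\infty$. The paper argues by contradiction: assuming $\liminf_k s_k=l<+\infty$ and passing to a subsequence $s_{k_j}\to l$, the inequality $s_{k_j+1}\le\frac{s_{k_j}-1}{\gamma_{k_j}}$ forces $\limsup_j s_{k_j+1}\le l-1$, contradicting $l\le\liminf_j s_{k_j+1}$. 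You instead iterate $s_k\ge 1+\gamma_k s_{k+1}$ forward, discard the nonnegative tail, and minorize the resulting series by the geometric series with ratio $\beta_k=\inf_{l\ge k}\gamma_l$, obtaining the explicit lower bound $s_k\ge\frac{1}{1-\beta_k}\to+\infty$. Your route is constructive and quantitative --- it tells you how fast $s_k$ must grow in terms of how fast $\gamma_k$ approaches $1$ --- at the cost of a slightly longer telescoping computation; the paper's contradiction argument is shorter but yields no rate. Both use only $\gamma_k\in\left]0,1\right[$ for large $k$ and $\gamma_k\to 1$ from Remark 3, so neither needs the finer content of Assumption $A_2$. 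The only points to keep explicit in your write-up are that the iteration may only start from an index $K$ beyond which both $\alpha_k\ge\gamma_k$ and $\gamma_k\in\left]0,1\right[$ hold, and that $s_{k+1}>0$ (implicit in the definition of $\alpha_k$) so that the rearrangement to $s_k\ge 1+\gamma_k s_{k+1}$ is legitimate; you have in fact noted both.
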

\begin{proof}
	See the detailed proof in Appendix \ref{A}.
\end{proof}
\begin{theorem}\label{L1}
	Suppose that Assumptions ${A_1}$ and ${A_2}$ hold and there exists a nonnegative sequence $\left\{ {{s_k}} \right\}$ such that  ${\alpha _k} = \frac{{{s_k} - 1}}{{{s_{k + 1}}}} \ge \gamma _k,$ for $k$ is sufficiently large. Then, we have that $F\left( {{x_{k + 1}}} \right) - F\left( {{x_* }} \right) = o\left( {\frac{1}{{s_{k + 1}^2}}} \right)$ and ${\left\| {{x_{k + 1}} - {x_k}} \right\|} = O\left( {\frac{1}{{s_{k + 1}}}} \right).$ Further, if $\sum\limits_{k = 1}^\infty  {\frac{1}{{s_{k+1}}}} $ is convergent, then the iterates $\left\{ {{x_k}} \right\}$ converges strongly to a minimizer of $F.$	
\end{theorem}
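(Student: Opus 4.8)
The plan is to run a Lyapunov (quasi-Fej\'er) argument driven by the comparison sequence $\{s_k\}$ rather than by $\{t_k\}$. First I would apply Lemma \ref{l4} twice with $y=y_{k+1}$ (so that $T_\lambda(y_{k+1})=x_{k+1}$): once at $x=x_k$, reproducing the descent estimate \eqref{O45}, and once at $x=x_*$, reproducing \eqref{O48}. Writing $v_k:=F(x_k)-F_*$, I would form the combination $(s_{k+1}-1)\times(\text{first})+(\text{second})$ and then multiply by $s_{k+1}$. Completing the square in the two quadratic brackets (the weights $s_{k+1}-1$ and $1$ sum to $s_{k+1}$, and the two cross terms $\tfrac{s_{k+1}-1}{s_{k+1}}\|x_k-x_*\|^2$ cancel) introduces the natural anchor $z_k:=s_k x_k-(s_k-1)x_{k-1}-x_*$ and the energy $\mathcal{E}_k:=s_k^2 v_k+\tfrac{1}{2\lambda}\|z_k\|^2$, giving a recursion of the shape
\begin{multline*}
\mathcal{E}_{k+1}\le \mathcal{E}_k+\big(s_{k+1}^2-s_{k+1}-s_k^2\big)v_k \\
+\frac{s_{k+1}^2}{2\lambda}\|y_{k+1}-w_{k+1}\|^2-\frac{1}{2\lambda}\|z_k\|^2-\frac{(1-\mu)s_{k+1}^2}{2\lambda}\|x_{k+1}-y_{k+1}\|^2,
\end{multline*}
where $w_{k+1}=\tfrac{(s_{k+1}-1)x_k+x_*}{s_{k+1}}$.

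Second, I would dispose of the inertial mismatch. Since the hypothesis only gives $\gamma_k\le\alpha_k=\tfrac{s_k-1}{s_{k+1}}$, the term $s_{k+1}^2\|y_{k+1}-w_{k+1}\|^2=\|(x_k-x_*)+s_{k+1}\gamma_k(x_k-x_{k-1})\|^2$ is not exactly $\|z_k\|^2$. Writing $s_{k+1}\gamma_k=\theta_k(s_k-1)$ with $\theta_k\in[0,1]$ and using convexity of $\|\cdot\|^2$ along $\theta_k z_k+(1-\theta_k)(x_k-x_*)$, I would bound it by $\theta_k\|z_k\|^2+(1-\theta_k)\|x_k-x_*\|^2$. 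Substituting, the recursion collapses to $\mathcal{E}_{k+1}\le\mathcal{E}_k-b_k+c_k$ with the genuine decrease $b_k:=\tfrac{(1-\mu)s_{k+1}^2}{2\lambda}\|x_{k+1}-y_{k+1}\|^2\ge0$ and the residual $c_k:=(s_{k+1}^2-s_{k+1}-s_k^2)v_k+\tfrac{1-\theta_k}{2\lambda}\|x_k-x_*\|^2$. The key independent input is that $\sum_k v_k<\infty$: combining Lemma \ref{l3} with Theorem \ref{T2.0}(1) gives it at once. Granting $\sum_k c_k<\infty$, a Robbins--Siegmund/quasi-Fej\'er lemma yields that $\mathcal{E}_k$ converges (hence is bounded, giving already the $O(1/s_{k+1}^2)$ bound) and, crucially, that $\sum_k b_k<\infty$, i.e. $\sum_k s_{k+1}^2\|x_{k+1}-y_{k+1}\|^2<\infty$. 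Feeding this back through Lemma \ref{l3} gives $\sum_k s_{k+1}^2 v_{k+1}<\infty$, whose summand must tend to zero; this is exactly $F(x_{k+1})-F_*=o(1/s_{k+1}^2)$.

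For the iterate variation, the identity $s_{k+1}(x_{k+1}-x_k)=z_{k+1}-(x_k-x_*)$ together with boundedness of $\|z_{k+1}\|$ (from $\mathcal{E}_k$ bounded) and of $\|x_k-x_*\|$ yields $\|x_{k+1}-x_k\|=O(1/s_{k+1})$; boundedness of $\|x_k-x_*\|$ follows by rewriting $x_k-x_*=z_k-(s_k-1)(x_k-x_{k-1})$ and using the just-derived $O(1/s_k)$ bound on the increments. Finally, when $\sum_k 1/s_{k+1}<\infty$, the estimate $\|x_{k+1}-x_k\|=O(1/s_{k+1})$ makes $\{x_k\}$ a Cauchy sequence, so it converges strongly to some $\bar x$; since $F$ is lower semicontinuous and $F(x_k)\to F_*$ by Theorem \ref{T2.0}(2), we get $F(\bar x)\le\liminf_k F(x_k)=F_*$, so $\bar x\in X$.

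The main obstacle is proving $\sum_k c_k<\infty$. The term $(s_{k+1}^2-s_{k+1}-s_k^2)v_k$ has an unbounded coefficient, and summability of $v_k$ alone does not suffice, since Abel summation against fast-growing weights can diverge. Here Lemma \ref{L2} ($\limsup_k(s_{k+1}^2-s_k^2)/s_k^2\le0$) is indispensable: it shows $s_{k+1}^2-s_k^2\le\eta_k s_k^2$ with $\eta_k$ eventually small, so the positive part is dominated by $\eta_k\,s_k^2 v_k$ while the negative $-s_{k+1}v_k$ and the per-step decrease $b_k$ absorb the remainder; making this quantitative is what closes the apparent circularity between bounding $c_k$ and bounding $\mathcal{E}_k$. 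The second piece of $c_k$, which carries the factor $(1-\theta_k)=(\alpha_k-\gamma_k)/\alpha_k$, is summable precisely because the comparison sequence is chosen with $\alpha_k$ only marginally above $\gamma_k$; controlling this gap under Assumption $A_2$ is the delicate bookkeeping that I expect to require the most care.
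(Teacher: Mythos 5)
Your route is genuinely different from the paper's: you build the classical Nesterov/Chambolle--Dossal energy $\mathcal{E}_k=s_k^2v_k+\tfrac{1}{2\lambda}\|z_k\|^2$ with the anchor $z_k=s_kx_k-(s_k-1)x_{k-1}-x_*$, whereas the paper uses the anchor-free Lyapunov function $E_k=s_{k+1}^2v_k+\tfrac{s_k^2}{2\lambda}\|x_k-x_{k-1}\|^2$, obtained by applying Lemma \ref{l4} only at $x=x_k$ (so that $\|x-y_{k+1}\|^2=\gamma_k^2\|x_k-x_{k-1}\|^2$ and the comparison enters simply as $\gamma_k^2\le\alpha_k^2$ with $(s_{k+1}\alpha_k)^2=(s_k-1)^2\le s_k^2-2s_k+1$). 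The paper never needs $x_*$ inside the quadratic part of the energy: the error bound, through Lemma \ref{l3}, converts the leftover $\tfrac{1-\mu}{2\lambda}s_{k+1}^2\|x_{k+1}-y_{k+1}\|^2$ into $\tfrac{1-\mu}{2\tau_1}s_{k+1}^2v_{k+1}$, and Lemma \ref{L2} guarantees that the resulting coefficient $\tfrac{s_{k+1}^2-s_{k+2}^2}{s_{k+1}^2}+\tfrac{1-\mu}{2\tau_1}$ is eventually bounded below by $\tfrac{1-\mu}{4\tau_1}>0$, which immediately gives $\sum_ks_{k+1}^2v_{k+1}<\infty$ and the boundedness of $s_{k+1}^2\|x_{k+1}-x_k\|^2$. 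This is exactly the quantitative absorption you describe as ``delicate bookkeeping'': it is resolvable, but only by routing the per-step decrease through Lemma \ref{l3} as the paper does, and you leave that step unexecuted.

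The more serious problem is the second piece of your residual, $\tfrac{1-\theta_k}{2\lambda}\|x_k-x_*\|^2$ with $1-\theta_k=(\alpha_k-\gamma_k)/\alpha_k$. You assert it is summable ``because the comparison sequence is chosen with $\alpha_k$ only marginally above $\gamma_k$,'' but the theorem assumes nothing beyond $\alpha_k\ge\gamma_k$ for large $k$, and in the paper's key applications the gap is not summable: in Case 3 one has $\gamma_k=1-a/k^r+o(1/k^r)$ with $r<1$ and $\alpha_k=1-p/k+o(1/k)$, so $\alpha_k-\gamma_k\sim a/k^r$ and $\sum_k(\alpha_k-\gamma_k)=+\infty$. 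Since $\|x_k-x_*\|$ for a fixed minimizer $x_*$ does not tend to zero a priori (the error bound controls only $\mathrm{dist}(x_k,X^*)$, and replacing $x_*$ by the moving projection destroys the telescoping of $z_k$), the sum $\sum_k(1-\theta_k)\|x_k-x_*\|^2$ can diverge and your quasi-Fej\'er lemma does not apply. This is precisely the obstruction that the paper's anchor-free energy is designed to avoid, so as written your argument does not close; it would need either an additional hypothesis on $\sum_k(\alpha_k-\gamma_k)$ (which would exclude Cases 1 and 3) or a switch to the paper's choice of Lyapunov function.
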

\begin{proof}
	Denote that ${E_k} = s_{k + 1}^2\left( {F\left( {{x_k}} \right) - F\left( {{x_* }} \right)} \right) + \frac{{s_k^2}}{{2\lambda }}{\left\| {{x_k} - {x_{k - 1}}} \right\|^2}.$ Applying (\ref{O45}), we have 
	\begin{equation*}
	F\left( {{x_{k + 1}}} \right) - F\left( {{x_* }} \right) + \frac{{1 - \mu }}{{2\lambda }}{\left\| {{x_{k + 1}} - {y_{k + 1}}} \right\|^2} + \frac{1}{{2\lambda }}{\left\| {{x_{k + 1}} - {x_k}} \right\|^2} \le F\left( {{x_k}} \right) - F\left( {{x_* }} \right) + \frac{{\gamma _k^2}}{{2\lambda }}{\left\| {{x_k} - {x_{k - 1}}} \right\|^2}.
	\end{equation*}
	By the assumption condition, we have $\gamma _k^2 \le \alpha _k^2$ for any $k$ is sufficiently large, then, 
	\begin{equation*}
	F\left( {{x_{k + 1}}} \right) - F\left( {{x_* }} \right) + \frac{{1 - \mu }}{{2\lambda }}{\left\| {{x_{k + 1}} - {y_{k + 1}}} \right\|^2} + \frac{1}{{2\lambda }}{\left\| {{x_{k + 1}} - {x_k}} \right\|^2} \le F\left( {{x_k}} \right) - F\left( {{x_* }} \right) + \frac{{\alpha _k^2}}{{2\lambda }}{\left\| {{x_k} - {x_{k - 1}}} \right\|^2}.
	\end{equation*}
	Multiplying by $s_{k + 1}^2,$ we have 
	\begin{align}
	&{E_{k + 1}} + \left( {s_{k + 1}^2 - s_{k + 2}^2} \right)\left( {F\left( {{x_{k + 1}}} \right) - F\left( {{x_*}} \right)} \right) + \frac{{1 - \mu }}{{2\lambda }}s_{k + 1}^2{{\left\| {{x_{k + 1}} - {y_{k + 1}}} \right\|}^2} \nonumber \\
	&\le s_{k + 1}^2\left( {F\left( {{x_k}} \right) - F\left( {{x_*}} \right)} \right) + \frac{{{{\left( {{s_k} - 1} \right)}^2}}}{{2\lambda }}{{\left\| {{x_k} - {x_{k - 1}}} \right\|}^2} \nonumber\\
	&= s_{k + 1}^2\left( {F\left( {{x_k}} \right) - F\left( {{x_*}} \right)} \right) + \frac{{s_k^2}}{{2\lambda }}{{\left\| {{x_k} - {x_{k - 1}}} \right\|}^2} - \frac{{2{s_k} - 1}}{{2\lambda }}{{\left\| {{x_k} - {x_{k - 1}}} \right\|}^2} \le {E_k}. \nonumber
	\end{align}
	Then, combining with the Lemma \ref{l3}, we have
	\begin{equation}\label{C13}
	{E_{k + 1}} + \left( {\frac{{\left( {s_{k + 1}^2 - s_{k + 2}^2} \right)}}{{s_{k + 1}^2}} + \frac{{1 - \mu }}{{2{\tau _1}}}} \right)s_{k + 1}^2\left( {F\left( {{x_{k + 1}}} \right) - F\left( {{x_*}} \right)} \right) \le {E_k}.
	\end{equation} 
	Since that $\mathop {\lim \sup }\limits_{k \to \infty } \frac{{s_{k + 2}^2 - s_{k + 1}^2}}{{s_{k + 1}^2}} \le 0$ from Lemma \ref{L2}, we have $\frac{{s_{k + 1}^2 - s_{k + 2}^2}}{{s_{k + 1}^2}} \ge  - \frac{{1 - \mu }}{{4{\tau _1}}}$ for $k$ is large sufficiently, then, (\ref{C13}) can be deduce that for any $k \ge k_0,$ where $k_0$ is sufficiently large,
	\begin{equation}\label{C31}
	{E_{k + 1}} + \frac{{1 - \mu }}{{4{\tau _1}}}s_{k + 1}^2\left( {F\left( {{x_{k + 1}}} \right) - F\left( {{x_*}} \right)} \right) \le {E_k}, 
	\end{equation}
	i.e., $\sum\nolimits_{k = {k_0}}^{ + \infty } {s_{k + 1}^2\left( {F\left( {{x_{k + 1}}} \right) - F\left( {{x_*}} \right)} \right)}  <  + \infty .$ Since that increasing the finite term does not change the convergence of the series, we can easy to obtain that $\sum\limits_{k = 1}^\infty  {s_{k + 1}^2\left( {F\left( {{x_{k + 1}}} \right) - F\left( {{x_* }} \right)} \right)} $ is convergent. Hence, $F\left( {{x_{k + 1}}} \right) - F\left( {{x_* }} \right) = o\left( {\frac{1}{{s_{k + 1}^2}}} \right)$ holds ture.
	
	Further, since that $\left\{ {{E_k}} \right\}$ is convergent from (\ref{C31}), we have $\left\{ {s_{k + 1}^2{{\left\| {{x_{k + 1}} - {x_k}} \right\|}^2}} \right\}$ is bounded, which means that ${\left\| {{x_{k + 1}} - {x_k}} \right\|} \le O\left( {\frac{1}{{s_{k + 1}}}} \right),$ i.e., there exists a constant $c_1 >0$ such that
	$\left\| {{x_{k+1}} - {x_{k}}} \right\| \le \frac{c_1}{{{s_{k+1}}}}.$
	Recalling the assumption that $\sum\limits_{k = 1}^\infty  {\frac{1}{{s_{k+1}}}} $ is convergent, we can deduce that the sequence $\left\{ {{x_k}} \right\}$ is a Cauchy series. Suppose that $\mathop {\lim }\limits_{k \to \infty } {x_k} = \bar x,$ we conclude that $\left\{ {{x_k}} \right\}$ strongly converges to $\bar x \in {X^ * }$ since $F$ is lower semi-continuous convex. 
\end{proof}

\section{The sublinear convergence rates of IFB algorithms with special $t_k$}
In the following, we show the improved convergence rates for the IFBs with six special $t_k$ satisfying the Assumption 2.

\textbf{Case 1.} ${t_k} = {e^{{{\left( {k - 1} \right)}^\alpha }}},0 < \alpha  < 1.$  
\begin{corollary}\label{C3.5}
	Suppose that Assumption $A_1$ holds. Let $\left\{ {{x_k}} \right\}$ be generated by Algorithm 1 with $t_k$ in Case 1 and ${x_* } \in {X}.$ Then, we have \\
	1) $F\left( {{x_k}} \right) - F\left( {{x_*}} \right) = o\left( {\frac{1}{{{e^{2{{\left( {k - 1} \right)}^\alpha }}}}}} \right)$ and $ \left\| {{x_k} - {x_{k - 1}}} \right\| = O\left( {\frac{1}{{{e^{{{\left( {k - 1} \right)}^\alpha }}}}}} \right).$ \\
	2) $\left\{ {{x_k}} \right\}$ is converges sublinearly to $\bar x \in {X^ * }$ at the $O\left( {{{\left( {k - 1} \right)}^{\alpha \left\lceil {{\textstyle{1 \over \alpha }} - 1} \right\rceil }}{e^{ - {{\left( {k - 1} \right)}^\alpha }}}} \right)$ rate of convergence.
\end{corollary}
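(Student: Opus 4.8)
The plan is to reduce the whole statement to the general machinery of Theorem~\ref{L1} via the ``comparison method'' with the tautological choice $s_k=t_k$, and then to turn the resulting step-size bound into an iterate rate by summing a tail of $\sum_j e^{-j^\alpha}$. First I would verify that $t_k=e^{(k-1)^\alpha}$ meets Assumption $A_2$, so that the hypotheses of Theorem~\ref{L1} are in force. Writing $\frac{t_{k+1}}{t_k}=e^{k^\alpha-(k-1)^\alpha}$ and using $k^\alpha-(k-1)^\alpha=\alpha k^{\alpha-1}(1+o(1))$ (mean value theorem, or the binomial expansion of $(1-1/k)^\alpha$), one gets $\frac{t_{k+1}}{t_k}-1=\alpha k^{\alpha-1}(1+o(1))$, hence $k^{1-\alpha}\bigl(\frac{t_{k+1}}{t_k}-1\bigr)\to\alpha>0$. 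Thus $A_2$ holds with $\sigma=1-\alpha\in(0,1)$ and $c=\alpha$.

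For part 1) I would invoke Theorem~\ref{L1} with $s_k:=t_k$. Then $\alpha_k=\frac{s_k-1}{s_{k+1}}=\gamma_k$, so the required inequality $\alpha_k\ge\gamma_k$ holds with equality and Theorem~\ref{L1} applies verbatim, giving $F(x_{k+1})-F_*=o(1/s_{k+1}^2)$ and $\|x_{k+1}-x_k\|=O(1/s_{k+1})$. Reindexing $k+1\mapsto k$ and substituting $s_k=e^{(k-1)^\alpha}$ (so $s_k^2=e^{2(k-1)^\alpha}$) yields exactly $F(x_k)-F_*=o\bigl(e^{-2(k-1)^\alpha}\bigr)$ and $\|x_k-x_{k-1}\|=O\bigl(e^{-(k-1)^\alpha}\bigr)$.

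For part 2), the strong convergence is the last assertion of Theorem~\ref{L1}, valid once $\sum_k 1/s_{k+1}=\sum_k e^{-k^\alpha}<\infty$; this is immediate since $k^\alpha-2\ln k\to+\infty$ gives $e^{-k^\alpha}\le k^{-2}$ for all large $k$. To get the rate I would bound, with $c_1$ the constant from Theorem~\ref{L1}, $\|x_k-\bar x\|\le\sum_{j\ge k}\|x_{j+1}-x_j\|\le c_1\sum_{j\ge k}e^{-j^\alpha}$, so everything reduces to estimating the tail $\sum_{j\ge k}e^{-j^\alpha}$.

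This tail estimate is the main obstacle. Since $x\mapsto e^{-x^\alpha}$ is decreasing, I would use $\sum_{j\ge k}e^{-j^\alpha}\le\int_{k-1}^\infty e^{-x^\alpha}\,dx$ and evaluate the integral by one integration by parts based on the identity $e^{-x^\alpha}=-\frac{x^{1-\alpha}}{\alpha}\frac{d}{dx}e^{-x^\alpha}$, which gives $\int_n^\infty e^{-x^\alpha}\,dx=\frac{n^{1-\alpha}}{\alpha}e^{-n^\alpha}+\frac{1-\alpha}{\alpha}\int_n^\infty x^{-\alpha}e^{-x^\alpha}\,dx$; bounding $x^{-\alpha}\le n^{-\alpha}$ in the remainder and absorbing it (legitimate for large $n$, as $\frac{1-\alpha}{\alpha}n^{-\alpha}\to0$) yields $\int_n^\infty e^{-x^\alpha}\,dx=O\bigl(n^{1-\alpha}e^{-n^\alpha}\bigr)$ with $n=k-1$. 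Finally, since $\lceil 1/\alpha-1\rceil\ge 1/\alpha-1$ forces $1-\alpha\le\alpha\lceil1/\alpha-1\rceil$, we have $(k-1)^{1-\alpha}\le(k-1)^{\alpha\lceil1/\alpha-1\rceil}$ for $k\ge2$, so the sharper bound $O\bigl((k-1)^{1-\alpha}e^{-(k-1)^\alpha}\bigr)$ implies the stated rate. The delicate point is exactly this tail estimate: the integer $\lceil1/\alpha-1\rceil$ is the number of integration-by-parts steps one may iterate before the residual integral acquires a nonpositive power of $n$ and can simply be discarded, which is the natural way to phrase the bound if one prefers not to absorb the remainder as above.
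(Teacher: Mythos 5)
Your proposal is correct and follows the same overall architecture as the paper's proof: verify Assumption $A_2$ (with $\sigma=1-\alpha$, $c=\alpha$), apply Theorem~\ref{L1} with the tautological choice $s_k=t_k$ (so $\alpha_k=\gamma_k$) to get part 1), and then control $\left\| {{x_k} - \bar x} \right\|$ by the tail of $\sum e^{-j^\alpha}$. The one place you genuinely diverge is the tail estimate. The paper substitutes $y=(x-1)^\alpha$ to rewrite the tail as $\frac{1}{\alpha}\int_A^{\infty} y^{1/\alpha-1}e^{-y}\,dy$ with $A=(k-1)^\alpha$, enlarges the exponent to the integer $\omega=\left\lceil 1/\alpha-1\right\rceil$, and integrates by parts $\omega$ times to land exactly on $O\left(A^{\omega}e^{-A}\right)$, which is the rate stated in the corollary. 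You instead perform a single integration by parts in the original variable and absorb the remainder via $x^{-\alpha}\le n^{-\alpha}$, obtaining $\int_n^\infty e^{-x^\alpha}dx = O\left(n^{1-\alpha}e^{-n^\alpha}\right)$; since $1-\alpha\le\alpha\left\lceil 1/\alpha-1\right\rceil$, this is in fact a sharper bound that implies the stated one (and matches the true asymptotics of the integral, whereas the paper's repeated-integration-by-parts bound is slightly lossy because of the ceiling). Your absorption step is legitimate once $\frac{1-\alpha}{\alpha}n^{-\alpha}\le\frac12$, and your verification of $\sum_k 1/s_{k+1}<\infty$ for the strong-convergence clause is the needed hypothesis of Theorem~\ref{L1}. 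In short: same reduction, a cleaner and marginally stronger handling of the final integral.
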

\begin{proof}
	We can easily verify that $\mathop {\lim }\limits_{k \to \infty } {k^{1 - \alpha }}\left( {\frac{{{t_{k + 1}}}}{{{t_k}}} - 1} \right) = \alpha,$ which means that Assumption $A_2$ holds. By setting ${s_k} = {t_k},$ we conclude that the result 1) is satisfied from Theorem \ref{L1}.
	It follows from the result 1) that there exists a positive constant \textcolor{red}{$c'$} such that  $\left\| {{x_k} - {x_{k - 1}}} \right\| \le \frac{{c'}}{{{e^{{{\left( {k - 1} \right)}^\alpha }}}}},$ we can deduce that  
	\[\forall p > 1,\quad \left\| {{x_{k + p}} - {x_k}} \right\| \le \sum\limits_{i = k + 1}^{k + p} {\left\| {{x_i} - {x_{i - 1}}} \right\|}  \le \sum\limits_{i = k + 1}^{k + p} {\frac{{c'}}{{{e^{{{\left( {k - 1} \right)}^\alpha }}}}}}  \le c'\int_k^{k + p} {{e^{ - {{\left( {x - 1} \right)}^\alpha }}}} dx.\]
	Since the convergence of $\int_1^{ + \infty } {{e^{ - {{\left( {x - 1} \right)}^\alpha }}}} dx,$ we see that $\sum\limits_{i = 1}^{ + \infty } {\left\| {{x_i} - {x_{i - 1}}} \right\|} $ is convergent, which means that ${\left\{ {{x_k}} \right\}}$ is a Cauchy series and converges strongly to $\bar x \in {X }.$
	Then, as $p \to \infty,$ we have
	\[{\left\| {{x_k} - \bar x} \right\| \le c'\int_k^{ + \infty } {{e^{ - {{\left( {x - 1} \right)}^\alpha }}}} dx = c'\int_{{{\left( {k - 1} \right)}^\alpha }}^{ + \infty } {\frac{1}{\alpha }{y^{\frac{1}{\alpha } - 1}}{e^{ - y}}} dy \le \frac{{c'}}{\alpha }\int_{{{\left( {k - 1} \right)}^\alpha }}^{ + \infty } {{y^{\left\lceil {\frac{1}{\alpha } - 1} \right\rceil }}{e^{ - y}}} dy.}\]
	Denote $\omega  = \left\lceil {{\textstyle{1 \over \alpha }} - 1} \right\rceil $ and $A = {\left( {k - 1} \right)^\alpha }.$  We can deduce that
	\begin{equation*}
	\left\| {{x_k} - \bar x} \right\| \le \frac{{c'}}{\alpha }\int_A^{ + \infty } {{y^\omega }{e^{ - y}}dy} = \frac{{c'}}{\alpha }{A^\omega }{e^{ - A}} + \frac{{c'}}{\alpha }\sum\limits_{j = 0}^{\omega  - 1} {\left( {\left( {\prod\limits_{i = 0}^j {\left( {\omega  - i} \right)} } \right)\left( {{A^{\omega  - j - 1}}{e^{ - A}}} \right)} \right)}  = O\left( {{A^\omega }{e^{ - A}}} \right),
	\end{equation*}
	which means that \[\left\| {{x_k} - \bar x} \right\| = O\left( {{{\left( {k - 1} \right)}^{\alpha \left\lceil {{\textstyle{1 \over \alpha }} - 1} \right\rceil }}{e^{ - {{\left( {k - 1} \right)}^\alpha }}}} \right).\]
	Hence, result 2) holds.
\end{proof}

\emph{Remark 4.} Notice that $\forall p > 1,\;{\left( {k - 1} \right)^{\alpha \left\lceil {{\textstyle{1 \over \alpha }} - 1} \right\rceil }}{e^{ - {{\left( {k - 1} \right)}^\alpha }}} = o\left( {\frac{1}{{{k^p}}}} \right),$ which means that the sublinear convergence rate of the IFB with the $t_k$ in Case 1 is faster than any order. 

\textbf{Case 2.} ${t_{k }} = \frac{{{k^r} - 1 + a}}{a}  \; \left( {r > 1}, a>0 \right).$ 
\begin{corollary}\label{C3.3}
	Suppose that Assumption $A_1$ holds. Let $\left\{ {{x_k}} \right\}$ be generated by Algorithm 1 with $t_k$ in Case 2 and ${x_* } \in {X }.$ Then, we have \\
	1) $F\left( {{x_k}} \right) - F\left( {{x_*}} \right) = o\left( {\frac{1}{{{k^{2r}}}}} \right) $ and $ \left\| {{x_k} - {x_{k - 1}}} \right\| = O\left( {\frac{1}{{{k^r}}}} \right).$ \\
	2) $\left\{ {{x_k}} \right\}$ is converges sublinearly to $\bar x \in {X^ * }$ at the $O\left( {\frac{1}{{{k^{r-1}}}}} \right)$ rate of convergence.
\end{corollary}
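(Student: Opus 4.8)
The plan is to mirror the argument of Corollary~\ref{C3.5} (Case~1), the chief simplification being that here the relevant tail integral reduces to the elementary $\int x^{-r}\,dx$ rather than an incomplete-gamma expression. First I would verify that $t_k=\frac{k^r-1+a}{a}$ satisfies Assumption~$A_2$ with $\sigma=1$. Writing $\frac{t_{k+1}}{t_k}-1=\frac{t_{k+1}-t_k}{t_k}=\frac{(k+1)^r-k^r}{k^r-1+a}$ and using the equivalences $(k+1)^r-k^r\sim r\,k^{r-1}$ and $k^r-1+a\sim k^r$, one obtains $\lim_{k\to\infty}k\left(\frac{t_{k+1}}{t_k}-1\right)=r=:c>0$, so Assumption~$A_2$ holds.

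Next I would set $s_k=t_k$ and invoke Theorem~\ref{L1}, which immediately yields $F(x_{k+1})-F(x_*)=o\!\left(\frac{1}{s_{k+1}^2}\right)$ and $\|x_{k+1}-x_k\|=O\!\left(\frac{1}{s_{k+1}}\right)$. Because $s_k=\frac{k^r-1+a}{a}\sim\frac{k^r}{a}$, these translate, after a harmless index shift, into $F(x_k)-F(x_*)=o\!\left(\frac{1}{k^{2r}}\right)$ and $\|x_k-x_{k-1}\|=O\!\left(\frac{1}{k^r}\right)$, which is exactly part~1).

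For part~2), since $r>1$ the series $\sum_k\frac{1}{s_{k+1}}$ converges (it behaves like $\sum k^{-r}$), so the final clause of Theorem~\ref{L1} guarantees that $\{x_k\}$ converges strongly to some $\bar x\in X^*$. To extract the explicit rate, I would use part~1) to fix a constant $c'>0$ with $\|x_i-x_{i-1}\|\le\frac{c'}{i^r}$ for all large $i$, and compare the tail sum with an integral, exploiting the monotonicity of $x\mapsto x^{-r}$:
\[
\|x_{k+p}-x_k\|\le\sum_{i=k+1}^{k+p}\|x_i-x_{i-1}\|\le c'\int_k^{k+p}\frac{dx}{x^r}.
\]
Letting $p\to\infty$ and evaluating $\int_k^{+\infty}x^{-r}\,dx=\frac{k^{1-r}}{r-1}$ gives $\|x_k-\bar x\|\le\frac{c'}{r-1}\,\frac{1}{k^{r-1}}=O\!\left(\frac{1}{k^{r-1}}\right)$, the claimed rate.

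The one place demanding care, and the step I would treat as the main obstacle, is the asymptotic bookkeeping: making the equivalences $s_k\sim k^r/a$ and $(k+1)^r-k^r\sim r\,k^{r-1}$ precise enough that the $o(\cdot)$ and $O(\cdot)$ statements in terms of $s_k$ transfer cleanly to statements in terms of $k$, and checking that the integral comparison is legitimate once $k$ is large enough that part~1) applies. Everything else is a direct specialization of Theorem~\ref{L1} and of the template already established in Case~1.
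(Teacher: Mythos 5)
Your proposal is correct and follows essentially the same route as the paper's own proof: verify Assumption $A_2$ with $\sigma=1$ and limit $r$, take $s_k=t_k$ so that $s_k\sim k^r/a$ and $\sum 1/s_{k+1}$ converges, invoke Theorem~\ref{L1} for part 1) and strong convergence, and obtain the $O(1/k^{r-1})$ rate by the same tail-sum/integral comparison with $\int_k^{+\infty}x^{-r}\,dx=\frac{k^{1-r}}{r-1}$. No substantive differences to report.
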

\begin{proof}
	It is easy to verify that
	$\mathop {\lim }\limits_{k \to \infty } k\left( {\frac{{{t_{k + 1}}}}{{{t_k}}} - 1} \right) = r$, which means that Assumption $A_2$ holds. Setting ${s_k} = {t_k},$ then, combining with $\mathop {\lim }\limits_{k \to \infty } \frac{{{s_k}}}{{{k^r}}} = \frac{1}{a}$ and $\sum\limits_{k = 1}^\infty  {\frac{1}{{{s_k}}}} $ is convergent, we can deduce that the result 1) holds and $\left\{ {{x_k}} \right\}$ converges strongly to $\bar x \in {X}$ by Theorem \ref{L1}.
	It follows from the result 1) that there exists a positive constant $c''$ such that $\left\| {{x_k} - {x_{k - 1}}} \right\| \le \frac{{c''}}{{{k^r}}}.$ Then, we can deduce that 
	\[\forall p > 1,\quad \left\| {{x_{k + p}} - {x_k}} \right\| \le \sum\limits_{i = k + 1}^{k + p} {\left\| {{x_i} - {x_{i - 1}}} \right\|}  \le c''\sum\limits_{i = k + 1}^{k + p} {\frac{1}{{{k^r}}}}  \le c''\int_k^{k + p} {\frac{1}{{{x^r}}}} dx.\]
	Then, \[\left\| {{x_k} - \bar x} \right\| \le \frac{{c''}}{{r - 1}}\frac{1}{{{k^{r - 1}}}},\; as\; p \to \infty.\]
	Hence, result 2) holds. 
\end{proof}

\emph{Remark 5.} For the $t_k$ in Case 2, we show that the convergence rate of function values and iterates related to the value of $r$. The larger $r,$ the better convergence rate Algorithm 1 achieves.

\textbf{Case 3.} ${t_k} = \frac{{{k^r} - 1 + a}}{a}\;\left( {r < 1,a > 0} \right).$
\begin{corollary}\label{C2.3}
	Suppose that Assumption $A_1$ holds. Let $\left\{ {{x_k}} \right\}$ be generated by Algorithm 1 with $t_k$ in Case 3 and ${x_* } \in {X }.$ Then, for any positive constant $p > 1,$\\
	1) $F\left( {{x_k}} \right) - F\left( {{x_*}} \right) = o\left( {\frac{1}{{{k^p}}}} \right)$ and $\left\| {{x_k} - {x_{k - 1}}} \right\| = O\left( {\frac{1}{{{k^p}}}} \right)$.\\
	2) $\left\{ {{x_k}} \right\}$ sublinearly converges to $\bar x \in {X^ * }$ at the $O\left( {\frac{1}{{{k^{p - 1}}}}} \right)$ rate of convergence. 
\end{corollary}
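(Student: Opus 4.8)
The plan is to reuse the template of Corollaries~\ref{C3.5} and~\ref{C3.3}, the essential new ingredient being that the comparison sequence $\{s_k\}$ must now be chosen independently of $t_k$: when $r<1$ the sequence $t_k=(k^r-1+a)/a$ grows like $k^r$, so $\sum_k 1/t_k$ diverges and $t_k$ cannot itself drive the strong-convergence argument. First I would record the relevant asymptotics. A direct expansion gives $\lim_{k\to\infty}k\bigl(\tfrac{t_{k+1}}{t_k}-1\bigr)=r$, so (taking $0<r<1$) Assumption~$A_2$ holds with $\sigma=1$ and $c=r$, and Lemma~\ref{l3} is available. Writing $1-\gamma_k=\frac{t_{k+1}-t_k+1}{t_{k+1}}$ and using $t_{k+1}-t_k\sim r\,k^{r-1}/a$ together with $t_{k+1}\sim k^r/a$, I obtain $1-\gamma_k\sim a\,k^{-r}+r\,k^{-1}$, whose leading term has order $k^{-r}$ precisely because $r<1$.

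Next, fix $p>1$ and take the comparison sequence $s_k=k^p$ (any sequence asymptotic to $k^p$ works equally well). Then $s_k\ge 0$ and $s_k\to+\infty$, while the same expansion yields $1-\alpha_k=\frac{s_{k+1}-s_k+1}{s_{k+1}}\sim p\,k^{-1}$, of order $k^{-1}$. The heart of the argument is the comparison inequality $\alpha_k\ge\gamma_k$, i.e.\ $1-\alpha_k\le 1-\gamma_k$, for all large $k$. Since $1-\gamma_k$ decays like $k^{-r}$ while $1-\alpha_k$ decays like the strictly faster $k^{-1}$, and $r<1$ forces $k^{-r}/k^{-1}=k^{1-r}\to+\infty$, the term $1-\gamma_k$ eventually dominates $1-\alpha_k$, giving $\gamma_k\le\alpha_k$ for $k$ sufficiently large. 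This is exactly where the hypothesis $r<1$ enters, and I expect it to be the main obstacle, since it has to be made rigorous with explicit remainder control on the two expansions rather than with the leading-order heuristics above.

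With the comparison inequality established, Theorem~\ref{L1} applies verbatim and gives $F(x_{k+1})-F(x_*)=o(1/s_{k+1}^2)=o(1/k^{2p})$ and $\|x_{k+1}-x_k\|=O(1/s_{k+1})=O(1/k^p)$; since $o(1/k^{2p})$ is in particular $o(1/k^p)$, part~1) follows, and the arbitrariness of $p$ shows the bounds hold for every $p>1$. Because $p>1$, the series $\sum_k 1/s_{k+1}=\sum_k 1/(k+1)^p$ converges, so the final clause of Theorem~\ref{L1} yields strong convergence of $\{x_k\}$ to some $\bar x\in X^*$. For the rate in part~2) I would copy the telescoping-plus-integral estimate of Corollary~\ref{C3.3}: from $\|x_k-x_{k-1}\|\le c/k^p$ and summing the tail, $\|x_k-\bar x\|\le\sum_{i>k}\|x_i-x_{i-1}\|\le c\int_k^{+\infty}x^{-p}\,dx=\frac{c}{p-1}\,\frac{1}{k^{p-1}}$, which is the claimed $O(1/k^{p-1})$ rate.
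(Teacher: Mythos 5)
Your proposal is correct and follows essentially the same route as the paper: verify Assumption $A_2$ with $\sigma=1$, expand $1-\gamma_k\sim a k^{-r}$ and $1-\alpha_k\sim p k^{-1}$ for the comparison sequence $s_k\sim k^p$, use $r<1$ to conclude $\alpha_k\ge\gamma_k$ eventually (the paper makes this rigorous exactly as you anticipate, by showing $\lim_{k\to\infty}k^r(\alpha_k-\gamma_k)=a>0$), and then invoke Theorem~\ref{L1} plus the telescoping integral estimate from Corollary~\ref{C3.3} for part~2). No gaps.
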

\begin{proof} 
	It is easy to verify that $\mathop {\lim }\limits_{k \to \infty } k\left( {\frac{{{t_{k + 1}}}}{{{t_k}}} - 1} \right) = r$, which means that Assumption $A_2$ holds.
	Since $\mathop {\lim }\limits_{k \to \infty } {k^r}\left( {{\gamma _k} - 1} \right) = \mathop {\lim }\limits_{k \to \infty } \frac{{ - {k^r}\left( {{t_{k + 1}} - {t_k} + 1} \right)}}{{{t_{k + 1}}}} = \mathop {\lim }\limits_{k \to \infty } \frac{{ - {k^r}\left( {{{\left( {k + 1} \right)}^r} - {k^r} + a} \right)}}{{{{\left( {k + 1} \right)}^r} - 1 + a}} =  - a,$
	we have 
	\begin{align}\label{C10}
	{\gamma _k} = 1 - \frac{a}{{{k^r}}} + o\left( {\frac{1}{{{k^r}}}} \right), \; as \; k \to + \infty.
	\end{align}
	For any positive constant $p > 1,$ denote that $s_1 = 1$ and ${s_k} = {\left( {k - 1} \right)^p},\; k \ge 2.$ Then, we have 
	\begin{align}\label{C11}
	{\alpha _k} = \frac{{{s_k} - 1}}{{{s_{k + 1}}}} = {\left( {1 - \frac{1}{k}} \right)^p} - \frac{1}{{{k^p}}} = 1 - \frac{p}{k} + o\left( {\frac{1}{k}} \right), \; as\; k \to + \infty.
	\end{align}
	By (\ref{C10}) and (\ref{C11}), we obtain that
	\begin{align}\label{C12}
	\mathop {\lim }\limits_{k \to \infty } {k^r}\left( {{\alpha _k} - {\gamma _k}} \right) = \mathop {\lim }\limits_{k \to \infty } {k^r}\left( { - \frac{p}{k} + \frac{a}{{{k^r}}} + o\left( {\frac{1}{{{k^r}}}} \right) + o\left( {\frac{1}{k}} \right)} \right) = a > 0,
	\end{align}
	which implies that ${{\alpha _k} \ge {\gamma _k}}$ for $k$ is large sufficiently. Hence, using ${s_k} \sim {k^p}$ and Theorem \ref{L1}, result 1) holds and $\left\{ {{x_k}} \right\}$ converges strongly to $\bar x \in {X }.$
	Similar with the proof of Corollary \ref{C3.3}, we conclude result 2).
\end{proof}

\textbf{Case 4.} ${t_1} = 1$ and ${t_k} = \frac{k}{{{{\ln }^\theta }k}}\left({\theta  > 0} \right),$  $\forall k \ge 2. $
\begin{corollary}\label{T3.3}
	Suppose that Assumption $A_1$ holds. Let $\left\{ {{x_k}} \right\}$ be generated by Algorithm 1 with $t_k$ in Case 4 and ${x_* } \in {X}.$ Then, for any positive constant $p \ge 2,$\\
	1) $F\left( {{x_k}} \right) - F\left( {{x_*}} \right) = o\left( {\frac{1}{{{k^{2p}}}}} \right)$ and $\left\| {{x_k} - {x_{k - 1}}} \right\| = O\left( {\frac{1}{{{k^p}}}} \right)$.\\
	2) $\left\{ {{x_k}} \right\}$ sublinearly converges to $\bar x \in {X^ * }$ at the $O\left( {\frac{1}{{{k^{p - 1}}}}} \right)$ rate of convergence. 
\end{corollary}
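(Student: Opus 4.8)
The plan is to follow verbatim the template already used for Cases 2 and 3 (Corollaries \ref{C3.3} and \ref{C2.3}): first check that $t_k = k/\ln^\theta k$ verifies Assumption $A_2$, then build an explicit comparison sequence $\{s_k\}$ with $s_k\sim k^p$ dominating $\{\gamma_k\}$, invoke Theorem \ref{L1} to read off the rate for the objective values and the step lengths, and finally integrate the step-length bound to control $\|x_k-\bar x\|$. To verify Assumption $A_2$ I would expand $\ln(k+1)=\ln k+1/k+o(1/k)$ and compute
\[
\frac{t_{k+1}}{t_k}=\frac{k+1}{k}\left(\frac{\ln k}{\ln(k+1)}\right)^{\theta}=1+\frac{1}{k}-\frac{\theta}{k\ln k}+o\!\left(\frac{1}{k}\right),
\]
so that $k\bigl(t_{k+1}/t_k-1\bigr)\to 1=:c>0$; hence Assumption $A_2$ holds with $\sigma=1$, and by Remark 3 we have $\gamma_k\in\,]0,1[$ for all large $k$.

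Fixing $p\ge 2$, I would take the same comparison sequence as in Case 3, namely $s_1=1$ and $s_k=(k-1)^p$ for $k\ge2$, and establish the two asymptotics needed to apply Theorem \ref{L1}. The binomial expansion of $(1-1/k)^p$ gives $1-\alpha_k=p/k+o(1/k)$, where $\alpha_k=(s_k-1)/s_{k+1}$. For $\gamma_k$, writing $f(x)=x/\ln^\theta x$ so that $f'(x)=(\ln x-\theta)/\ln^{\theta+1}x\to0$, one gets $t_{k+1}-t_k\to0$ and $t_{k+1}\sim k/\ln^\theta k$, whence
\[
1-\gamma_k=\frac{t_{k+1}-t_k+1}{t_{k+1}}=\frac{\ln^\theta k}{k}\,(1+o(1)).
\]
Comparing the two expansions yields $(1-\gamma_k)/(1-\alpha_k)=\ln^\theta k/p\,(1+o(1))\to+\infty$, so $1-\gamma_k>1-\alpha_k$, i.e. $\alpha_k\ge\gamma_k$, for all $k$ sufficiently large. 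This single domination inequality is the only hypothesis of Theorem \ref{L1} that is not immediate.

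With $s_k\sim k^p$ in hand, Theorem \ref{L1} delivers $F(x_k)-F(x_*)=o(1/s_{k+1}^2)=o(1/k^{2p})$ and $\|x_k-x_{k-1}\|=O(1/s_{k+1})=O(1/k^p)$, which is result 1); and since $p\ge2>1$ the series $\sum_k 1/s_{k+1}=\sum_k 1/k^p$ converges, so the same theorem forces $\{x_k\}$ to converge strongly to some $\bar x\in X^*$. For result 2) I would reproduce the integral argument of Corollaries \ref{C3.3} and \ref{C2.3}: from $\|x_k-x_{k-1}\|\le c/k^p$ the telescoping tail $\sum_{i>k}\|x_i-x_{i-1}\|$ is bounded by $c\int_k^{\infty}x^{-p}\,dx=\frac{c}{(p-1)k^{p-1}}$, giving $\|x_k-\bar x\|=O(1/k^{p-1})$.

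The only genuinely non-mechanical point is pinning down the exact order $1-\gamma_k\asymp\ln^\theta k/k$ in the second paragraph, which requires the estimate $t_{k+1}-t_k\to0$ for $f(x)=x/\ln^\theta x$. Once this is in place, the logarithmic factor $\ln^\theta k\to\infty$ overwhelms the constant $p$ coming from the power comparison sequence, so the domination $\alpha_k\ge\gamma_k$ holds automatically for large $k$ and everything else is a transcription of the arguments already carried out for Cases 2 and 3.
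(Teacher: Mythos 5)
Your proposal is correct and follows essentially the same route as the paper: verify Assumption $A_2$ with $\sigma=1$, expand $\gamma_k$ to find $1-\gamma_k\sim\ln^\theta k/k$ (the paper does this by expanding $(\ln(k+1)/\ln k)^\theta$ directly, you via $t_{k+1}-t_k\to0$, which amounts to the same asymptotic), compare with $s_k\sim k^p$ so that the divergent factor $\ln^\theta k$ beats the constant $p$ and yields $\alpha_k\ge\gamma_k$ for large $k$, then invoke Theorem \ref{L1} and the telescoping integral bound. No gaps.
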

\begin{proof}
	We can prove that $\mathop {\lim }\limits_{k \to \infty } k\left( {\frac{{{t_{k + 1}}}}{{{t_k}}} - 1} \right) = 1,$  which means that Assumption $A_2$ holds.
	Observe that
	\[\begin{array}{l}
	{\gamma _k} = \frac{{{t_k}}}{{{t_{k + 1}}}} - \frac{1}{{{t_{k + 1}}}} = \left( {1 - \frac{1}{{k + 1}}} \right){\left( {\frac{{\ln \left( {k + 1} \right)}}{{\ln k}}} \right)^\theta } - \frac{{{{\ln }^\theta }\left( {k + 1} \right)}}{{k + 1}}\\
	= \left( {1 - \frac{1}{{k + 1}}} \right){\left( {1 + \frac{{\ln \left( {1 + \frac{1}{k}} \right)}}{{\ln k}}} \right)^\theta } - \frac{{{{\ln }^\theta }\left( {k + 1} \right)}}{{k + 1}}\\
	= \left( {1 - \frac{1}{{k + 1}}} \right)\left( {1 + \frac{{\theta \ln \left( {1 + \frac{1}{k}} \right)}}{{\ln k}} + o\left( {\frac{{\ln \left( {1 + \frac{1}{k}} \right)}}{{\ln k}}} \right)} \right) - \frac{{{{\ln }^\theta }\left( {k + 1} \right)}}{{k + 1}}\\
	= \left( {1 - \frac{1}{{k + 1}}} \right)\left( {1 + o\left( {\frac{1}{{k + 1}}} \right)} \right) - \frac{{{{\ln }^\theta }\left( {k + 1} \right)}}{{k + 1}}\\
	= 1 - \frac{1}{{k + 1}} - \frac{{{{\ln }^\theta }\left( {k + 1} \right)}}{{k + 1}} + o\left( {\frac{1}{{k + 1}}} \right).
	\end{array}\]
	Denote that ${s_k} = {k^p},$ where $p \ge 2.$ Then, we easily obtain that
	\[{\alpha _k} = \frac{{{s_k} - 1}}{{{s_{k + 1}}}} = {\left( {1 - \frac{1}{{k + 1}}} \right)^p} - \frac{1}{{{{\left( {k + 1} \right)}^p}}} = 1 - \frac{p}{{k + 1}} + o\left( {\frac{1}{{k + 1}}} \right).\]
	Hence, the condition ${{\alpha _k} \ge {\gamma _k}}$ holds true for $k$ is large sufficiently.
	Using Theorem \ref{L1}, result 1) holds. And similar with the proof of Corollary \ref{C3.3}, result 2) holds. 
\end{proof}

\textbf{Case 5.} $t_1 = 1$ and ${t_{k + 1}} = \frac{{1 + \sqrt {1 + 4t_k^2} }}{2},$ $\forall k \ge 1.$ 
\begin{corollary}\label{T3.1}
	Suppose that Assumption $A_1$ holds. Let $\left\{ {{x_k}} \right\}$ be generated by Algorithm 1 with $t_k$ in Case 5 and ${x_* } \in {X}.$ Then,\\
	1) $F\left( {{x_k}} \right) - F\left( {{x_*}} \right) = o\left( {\frac{1}{{{k^6}}}} \right)$ and $\left\| {{x_k} - {x_{k - 1}}} \right\| = O\left( {\frac{1}{{{k^3}}}} \right)$.\\
	2) $\left\{ {{x_k}} \right\}$ sublinearly converges to $\bar x \in {X^ * }$ at the $O\left( {\frac{1}{{{k^2}}}} \right)$ rate of convergence.
\end{corollary}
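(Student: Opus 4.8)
The plan is to apply Theorem \ref{L1} with a carefully chosen comparison sequence. Unlike Cases 2--4, the naive choice $s_k = t_k$ here would merely give $\alpha_k = \gamma_k$ and reproduce the classical $o(1/k^2)$ rate; the error bound condition lets us do much better. First I would verify Assumption $A_2$. The recursion gives $t_{k+1}^2 = t_{k+1} + t_k^2$, hence $t_{k+1} - t_k = t_{k+1}/(t_{k+1}+t_k)$; since $t_k \ge (k+1)/2$ (an easy induction) we have $t_k \to +\infty$, so $t_{k+1}/t_k \to 1$ and therefore $t_{k+1}-t_k \to 1/2$. By Stolz--Ces\`aro this yields $t_k/k \to 1/2$, whence $\lim_k k(t_{k+1}/t_k - 1) = \lim_k (k/t_k)(t_{k+1}-t_k) = 2\cdot\frac12 = 1$, so $A_2$ holds with $\sigma = 1$ and $c = 1$.

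The crucial step is to take $s_k = t_k^3$ and verify $\alpha_k = (t_k^3-1)/t_{k+1}^3 \ge \gamma_k = (t_k-1)/t_{k+1}$ for $k$ large. Since $t_k > 1$ eventually, I factor $t_k^3 - 1 = (t_k-1)(t_k^2 + t_k + 1)$ and divide both sides by $(t_k-1)/t_{k+1} > 0$, reducing the claim to $t_k^2 + t_k + 1 \ge t_{k+1}^2$. Substituting the recursion $t_{k+1}^2 = t_{k+1} + t_k^2$, this is exactly $t_{k+1} - t_k \le 1$, which holds for every $k$ because $t_{k+1} - t_k = t_{k+1}/(t_{k+1}+t_k) < 1$. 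This is the heart of the argument, and the exponent $3$ is the largest for which it works: for $s_k = t_k^q$ the same reduction gives $1 + t_k + \cdots + t_k^{q-1} \ge t_{k+1}^{q-1}$, which fails for $q > 3$.

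With $s_k = t_k^3$ established, Theorem \ref{L1} immediately gives $F(x_{k+1}) - F(x_*) = o(1/s_{k+1}^2) = o(1/t_{k+1}^6)$ and $\|x_{k+1}-x_k\| = O(1/t_{k+1}^3)$. Since $t_{k+1} \ge (k+2)/2 \ge k/2$, one has $k^6 \le 64\,t_{k+1}^6$ and $k^3 \le 8\,t_{k+1}^3$, so these convert to $o(1/k^6)$ and $O(1/k^3)$, which is result 1). Moreover $\sum_k 1/s_{k+1} = \sum_k 1/t_{k+1}^3$ is dominated by a constant multiple of $\sum_k 1/k^3$ and hence converges, so the final part of Theorem \ref{L1} yields strong convergence of $\{x_k\}$ to some $\bar x \in X^*$.

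Finally, for result 2) I would mirror the tail-sum argument of Corollary \ref{C3.3}: from $\|x_i - x_{i-1}\| \le c/t_i^3 \le c'/i^3$ I bound $\|x_k - \bar x\| \le \sum_{i>k}\|x_i - x_{i-1}\| \le c'\sum_{i>k} i^{-3} = O(1/k^2)$, which is the asserted $O(1/k^2)$ rate. I expect the main obstacle to be the comparison inequality $\alpha_k \ge \gamma_k$, specifically recognizing that $s_k = t_k^3$ is the correct comparison sequence: a direct power-of-$k$ choice $s_k = k^3$ is spoiled by the $\tfrac14\ln k$ correction in the asymptotics $t_k = \tfrac k2 + \tfrac14\ln k + O(1)$, whereas working with $t_k$ itself lets the FISTA recursion collapse the inequality to the trivial bound $t_{k+1}-t_k \le 1$.
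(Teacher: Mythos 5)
Your proof is correct, and it takes a genuinely different route from the paper's at the one step that matters. The paper never touches the algebraic identity $t_{k+1}^2 = t_{k+1} + t_k^2$ directly; instead it expands $\gamma_k$ asymptotically to second order, obtaining $\gamma_k = 1 - \tfrac{3}{k} + \tfrac{3\ln k}{2k^2} + o\left(\tfrac{\ln k}{k^2}\right)$ (the $\ln k/k^2$ term coming from $t_k = \tfrac{k}{2} + \tfrac14\ln k + O(1)$, established via Stolz), and then hand-builds the comparison sequence $s_k = (k-1)^3\big/\bigl(\int_1^{k-1}\tfrac{\ln x}{x^2}\,dx\bigr)^2$, whose expansion $\alpha_k = 1 - \tfrac{3}{k} + \tfrac{2\ln k}{k^2} + o\left(\tfrac{\ln k}{k^2}\right)$ beats $\gamma_k$ because $2 > \tfrac32$. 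Your choice $s_k = t_k^3$ sidesteps all of this: factoring $t_k^3 - 1$ and invoking the exact recursion collapses $\alpha_k \ge \gamma_k$ to $t_{k+1} - t_k \le 1$, which is an identity-level consequence of $(t_{k+1}-t_k)(t_{k+1}+t_k) = t_{k+1}$ and holds for every $k$, not merely asymptotically. Both sequences satisfy $s_k \sim c\,k^3$, so Theorem 2.5 delivers identical rates, and your conversions $t_{k+1} \ge (k+2)/2$, the convergence of $\sum t_{k+1}^{-3}$, and the tail-sum bound $\sum_{i>k} i^{-3} = O(k^{-2})$ for result 2) all check out. What your approach buys is robustness and brevity (no Stolz, no integral-weighted correction, no asymptotic expansion of $\gamma_k$); what the paper's buys is a template that generalizes to the non-recursive $t_k$ of its other cases, where no exact algebraic identity is available and the logarithmic correction is genuinely needed. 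Your closing observation that $q=3$ is the largest integer exponent for which $s_k = t_k^q$ works is also accurate and correctly explains why the plain choice $s_k = k^3$ fails.
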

\begin{proof}
	We can easily obtain that $\mathop {\lim }\limits_{k \to \infty } k\left( {\frac{{{t_{k + 1}}}}{{{t_k}}} - 1} \right) = 1,$ which means that Assumption $A_2$ holds.
	Observe that
	\begin{align}\label{C5}
	{\gamma _k} = \frac{{{t_k} - 1}}{{{t_{k + 1}}}} = 1 - \frac{{{t_{k + 1}} - {t_k} - \frac{1}{2}}}{{{t_{k + 1}}}} - \frac{3}{{2{t_{k + 1}}}}. 
	\end{align}
	Since that $\mathop {{\rm{lim}}}\limits_{k \to \infty } {t_k}\left( {{t_{k + 1}} - {t_k} - \frac{1}{2}} \right) = \mathop {{\rm{lim}}}\limits_{k \to \infty } {t_k}\left( {\frac{{1 + \sqrt {1 + 4t_k^2} }}{2} - {t_k} - \frac{1}{2}} \right) = \frac{1}{8}$ and $\mathop {\rm{lim}}\limits_{k \to \infty } \frac{{{t_k}}}{k} = \frac{1}{2},$ we can deduce that 
	\[\mathop {{\rm{lim}}}\limits_{k \to \infty } {k^2}\left( {\frac{{{t_{k + 1}} - {t_k} - \frac{1}{2}}}{{{t_{k + 1}}}}} \right) = \frac{1}{2},\]
	which means that 
	\begin{align}\label{C6}
	\frac{{{t_{k + 1}} - {t_k} - \frac{1}{2}}}{{{t_{k + 1}}}} = \frac{1}{{2{k^2}}} + o\left( {\frac{1}{{{k^2}}}} \right),\; as \; k \to  + \infty.
	\end{align}
	By Stolz theorem, we obtain \[\mathop {\rm{lim}}\limits_{k \to \infty } \frac{{\frac{1}{2}k - {t_{k + 1}}}}{{\ln k}} = \mathop {\rm{lim}}\limits_{k \to \infty } \frac{{\frac{1}{2} - \left( {{t_{k + 2}} - {t_{k + 1}}} \right)}}{{\ln \left( {1 + \frac{1}{k}} \right)}} = \mathop {\rm{lim}}\limits_{k \to \infty } \frac{{ - k}}{{{t_{k + 1}}}}{t_{k + 1}}\left( {{t_{k + 2}} - {t_{k + 1}} - \frac{1}{2}} \right) =  - \frac{1}{4},\]
	then, 
	\[\mathop {\lim }\limits_{k \to \infty } \frac{{\frac{3}{{2{t_{k + 1}}}} - \frac{3}{k}}}{{\frac{{\ln k}}{{{k^2}}}}} = \mathop {\lim }\limits_{k \to \infty } 3\frac{k}{{{t_{k + 1}}}}\left( {\frac{{\frac{1}{2}k - {t_{k + 1}}}}{{\ln k}}} \right) =  - \frac{3}{2},\]
	which means that
	\begin{align}\label{C7}
	\frac{3}{{2{t_{k + 1}}}} =   \frac{3}{k} - \frac{3}{2}\frac{{\ln k}}{{{k^2}}} + o\left( {\frac{{\ln k}}{{{k^2}}}} \right), \; as \; k \to  + \infty.
	\end{align} 
	Hence, by (\ref{C5})--(\ref{C7}), we have
	\begin{align}\label{C8}
	{\gamma _k} = 1 - \frac{3}{k} + \frac{{3\ln k}}{{2{k^2}}} + o\left( {\frac{{\ln k}}{{{k^2}}}} \right),\; as \; k \to  + \infty.
	\end{align}
	Denote that ${s_1} = {s_2} = 1$ and ${s_k} = \frac{{{{\left( {k - 1} \right)}^3}}}{{{{\left( {\int_1^{k - 1} {\frac{{\ln x}}{{{x^2}}}dx} } \right)}^2}}},\; \forall k \ge 3.$ Then, since that $\int_1^{ + \infty } {\frac{{\ln x}}{{{x^2}}}dx}  = 1$ and $\frac{{{{\left( {\int_{k - 1}^k {\frac{{\ln x}}{{{x^2}}}dx} } \right)}^2}}}{{{k^3}}} = o\left( {\frac{{\ln k}}{{{k^2}}}} \right),$ we have 
	\begin{align}
	{\alpha _k} &= \frac{{{s_k} - 1}}{{{s_{k + 1}}}} \label{C9} \\
	&= {{\left( {1 - \frac{1}{k}} \right)}^3}{{\left( {1 + \frac{{\int_{k - 1}^k {\frac{{\ln x}}{{{x^2}}}dx} }}{{\int_1^{k - 1} {\frac{{\ln x}}{{{x^2}}}dx} }}} \right)}^2} - \frac{{{{\left( {\int_{k - 1}^k {\frac{{\ln x}}{{{x^2}}}dx} } \right)}^2}}}{{{k^3}}} \nonumber \\
	&\ge {{\left( {1 - \frac{1}{k}} \right)}^3}{{\left( {1 + \frac{{\frac{{\ln k}}{{{k^2}}}}}{{\int_1^{ + \infty } {\frac{{\ln x}}{{{x^2}}}dx} }}} \right)}^2} - \frac{{{{\left( {\int_{k - 1}^k {\frac{{\ln x}}{{{x^2}}}dx} } \right)}^2}}}{{{k^3}}}\nonumber \\
	&= {{\left( {1 - \frac{1}{k}} \right)}^3}{{\left( {1 + \frac{{\ln k}}{{{k^2}}}} \right)}^2} - \frac{{{{\left( {\int_{k - 1}^k {\frac{{\ln x}}{{{x^2}}}dx} } \right)}^2}}}{{{k^3}}} \nonumber\\
	&= 1 - \frac{3}{k} + \frac{{2\ln k}}{{{k^2}}} + o\left( {\frac{{\ln k}}{{{k^2}}}} \right),\;as\;k \to  + \infty. \nonumber
	\end{align}
	Obviously, by (\ref{C8}) and (\ref{C9}), we have $\mathop {\lim }\limits_{k \to \infty } \frac{{{\alpha _k} - {\gamma _k}}}{{\frac{{\ln k}}{{{k^2}}}}} \ge \frac{1}{2}.$ Therefore, $\alpha _k \ge \gamma _k$ for $k$ is large sufficiently.
	Using the fact that ${s_k} \sim {k^3}$ and Theorem \ref{L1}, we conclude the result 1) and  $\left\{ {{x_k}} \right\}$ converges strongly to $\bar x \in {X}.$
	Further, similar with the proof of Corollary \ref{C3.3}, result 2) holds.
\end{proof}

\textbf{Case 6.} ${t_{k}} = \frac{{k - 1 + a}}{a}  \; \left( {a > 0} \right).$
\begin{corollary}\label{T3.2}
	Suppose that Assumption $A_1$ holds. Let $\left\{ {{x_k}} \right\}$ be generated by Algorithm 1 with $t_k$ in Case 6 and ${x_* } \in {X}.$ Then,\\
	1) $F\left( {{x_k}} \right) - F\left( {{x_*}} \right) = o\left( {\frac{1}{{{k^{2\left( {a + 1} \right)}}}}} \right)$ and $\left\| {{x_k} - {x_{k - 1}}} \right\| = O\left( {\frac{1}{{{k^{a + 1}}}}} \right)$.\\
	2) $\left\{ {{x_k}} \right\}$ sublinearly converges to $\bar x \in {X^ * }$ at the $O\left( {\frac{1}{{{k^a}}}} \right)$ rate of convergence. 
\end{corollary}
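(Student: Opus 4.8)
The plan is to derive both parts from Theorem~\ref{L1} via the comparison method: everything reduces to producing a nonnegative sequence $\{s_k\}$ with $s_k\sim k^{a+1}$ and $\alpha_k=\frac{s_k-1}{s_{k+1}}\ge\gamma_k$ for all large $k$. First I would dispatch the two routine computations. Since $t_k=\frac{k-1+a}{a}$ gives $\frac{t_{k+1}}{t_k}=\frac{k+a}{k-1+a}$, we get $k\left(\frac{t_{k+1}}{t_k}-1\right)=\frac{k}{k-1+a}\to 1$, so Assumption~$A_2$ holds with $\sigma=1$ and $c=1$. Likewise $\gamma_k=\frac{t_k-1}{t_{k+1}}=\frac{k-1}{k+a}=1-\frac{a+1}{k+a}$, whose expansion is $\gamma_k=1-\frac{a+1}{k}+\frac{a(a+1)}{k^2}+o(1/k^2)$.

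The crux, and the step I expect to be the main obstacle, is the choice of $s_k$. Unlike Cases~2 and~3, where the leading $1/k$ corrections of $\alpha_k$ and $\gamma_k$ differ, here every $s_k\sim k^{a+1}$ automatically matches $\gamma_k$ at first order, so the sign of $\alpha_k-\gamma_k$ is decided only at second order and is moreover pushed the wrong way by the unavoidable term $-\frac{1}{s_{k+1}}\sim -k^{-(a+1)}$ coming from the ``$-1$'' in the numerator of $\alpha_k$. A plain power, e.g.\ $s_k=(k+a-1)^{a+1}$, yields $\alpha_k-\gamma_k=\frac{a(a+1)}{2k^2}-\frac{1}{k^{a+1}}+o(\cdots)$, which is nonnegative for large $k$ only when $a>1$. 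To handle every $a>0$ I would instead force $\alpha_k=\gamma_k$ exactly by the recursion $s_{k+1}=\frac{(k+a)(s_k-1)}{k-1}$ for $k$ large (assigning $s_k$ arbitrary positive values earlier). With the homogeneous solution $\tilde s_k=\frac{\Gamma(k+a)}{\Gamma(k-1)}$, Stirling gives $\tilde s_k\sim k^{a+1}$ and $\frac{\tilde s_k}{\tilde s_{k+1}}=\gamma_k$; the ratio $u_k=s_k/\tilde s_k$ then obeys $u_{k+1}=u_k-\frac{1}{\tilde s_k}$, and since $\sum 1/\tilde s_k<\infty$ (because $a+1>1$) it converges to a limit that is positive once the initial $s_{k_0}$ is taken large enough. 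Hence $s_k\sim k^{a+1}$, $s_k>0$, and $\alpha_k\ge\gamma_k$, so Theorem~\ref{L1} is applicable.

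With such $s_k$ in hand the conclusions follow quickly. Theorem~\ref{L1} gives $F(x_k)-F(x_*)=o(1/s_{k+1}^2)=o(1/k^{2(a+1)})$ and $\|x_k-x_{k-1}\|=O(1/s_{k+1})=O(1/k^{a+1})$, and $\sum 1/s_{k+1}<\infty$ forces $\{x_k\}$ to be Cauchy and to converge strongly to some $\bar x\in X^*$, establishing part~1). For part~2) I would reuse the integral-comparison argument of Corollary~\ref{C3.3}: writing $\|x_k-x_{k-1}\|\le c'' k^{-(a+1)}$, we get $\|x_{k+p}-x_k\|\le c''\int_k^{k+p}x^{-(a+1)}\,dx$, and letting $p\to\infty$ yields $\|x_k-\bar x\|\le \frac{c''}{a}k^{-a}=O(1/k^a)$, which is exactly the claimed rate.
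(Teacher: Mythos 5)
Your proposal is correct, and the two routine parts (verifying Assumption $A_2$, and the integral-comparison argument giving the $O(1/k^a)$ rate for the iterates in part 2) coincide with the paper's. Where you genuinely diverge is the crux you correctly identified: how to build a comparison sequence $s_k\sim k^{a+1}$ with $\alpha_k=\frac{s_k-1}{s_{k+1}}\ge\gamma_k$ for all $a>0$. The paper splits into three sub-cases: for $a>1$ it uses the plain power $s_k=(k+a-1)^{a+1}$ (where $\frac{1}{(k+a)^{a+1}}=o\left(\frac{1}{k^2}\right)$ saves the day), for $a=1$ it observes that $\alpha_k=\gamma_k$ holds exactly, and for $a<1$ it introduces the log-corrected sequence $s_k=\frac{(k+a-1)^{a+1}}{\int_1^{k-1}\frac{\ln x}{x^{1+a}}\,dx}$, whose extra $+\frac{a^2\ln k}{k^{1+a}}$ term dominates the $-\frac{1}{k^{a+1}}$ deficit. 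You instead give a single unified construction: define $s_k$ by the recursion $s_{k+1}=\frac{(k+a)(s_k-1)}{k-1}$ so that $\alpha_k=\gamma_k$ identically, identify the homogeneous solution $\tilde s_k=\frac{\Gamma(k+a)}{\Gamma(k-1)}\sim k^{a+1}$ with $\tilde s_k/\tilde s_{k+1}=\gamma_k$, and show via $u_{k+1}=u_k-\frac{1}{\tilde s_k}$ and the summability of $k^{-(a+1)}$ that $s_k$ stays comparable to $\tilde s_k$ when started large enough. This is a clean trade: the paper's explicit formulas make the growth order of $s_k$ immediate at the cost of a case analysis and a somewhat delicate estimate of the integral ratio, while your recursion removes the case split entirely and makes transparent that the whole construction hinges only on $\sum k^{-(a+1)}<\infty$. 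One cosmetic remark: the plain power is in fact also adequate at $a=1$ (the two second-order terms cancel exactly rather than merely competing), so ``only when $a>1$'' should read ``only when $a\ge 1$''; this does not affect your argument since your construction covers every $a>0$ uniformly.
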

\begin{proof}
	It is easy to verify that $\mathop {\lim }\limits_{k \to \infty } k\left( {\frac{{{t_{k + 1}}}}{{{t_k}}} - 1} \right) = 1$, which means that Assumption $A_2$ holds. 
	Observe that ${\gamma _k} = \frac{{{t_k} - 1}}{{{t_{k + 1}}}} = \frac{{k - 1}}{{k + a}}.$ 
	For $a \ge 1,$ denote ${s_k} = {\left( {k + a - 1} \right)^{a + 1}}.$ Otherwise, denote ${s_1} = {s_2} = 1,$ and ${s_k} = \frac{{{{\left( {k + a - 1} \right)}^{a + 1}}}}{{\int_1^{k - 1} {\frac{{\ln x}}{{{x^{1 + a}}}}dx} }},\; \forall k \ge 3.$
	
	1) For the case $a>1,$ we have 
	\begin{align}\label{C2}
	\begin{array}{l}
	{\alpha _k} = \frac{{{{\left( {k + a - 1} \right)}^{a + 1}} - 1}}{{{{\left( {k + a} \right)}^{a + 1}}}} = {\left( {1 - \frac{1}{{k + a}}} \right)^{a + 1}} - \frac{1}{{{{\left( {k + a} \right)}^{a + 1}}}}\\
	\quad \;\; = 1 - \frac{{a + 1}}{{k + a}} + \frac{{a\left( {a + 1} \right)}}{2}\frac{1}{{{{\left( {k + a} \right)}^2}}} + o\left( {\frac{1}{{{{\left( {k + a} \right)}^2}}}} \right) - \frac{1}{{{{\left( {k + a} \right)}^{a + 1}}}}\\
	\quad \;\; = {\gamma _k} + \frac{{a\left( {a + 1} \right)}}{2}\frac{1}{{{{\left( {k + a} \right)}^2}}} + o\left( {\frac{1}{{{{\left( {k + a} \right)}^2}}}} \right), as \; k \to  + \infty.
	\end{array}
	\end{align}
	which means that $\mathop {\lim }\limits_{k \to \infty } {\left( {k + a} \right)^2}\left( {{\alpha _k} - {\gamma _k}} \right) = \frac{{a\left( {a + 1} \right)}}{2} > 0,$ i.e., ${{\alpha _k} > {\gamma _k}}$ for $k$ is large sufficiently.
	
	2) For the case $a=1,$ we have 
	\begin{align}\label{C3}
	{\alpha _k} = \frac{{{k^2} - 1}}{{{{\left( {k + 1} \right)}^2}}} = {\left( {1 - \frac{1}{{k + 1}}} \right)^2} - \frac{1}{{{{\left( {k + 1} \right)}^2}}} = {\gamma _k}.
	\end{align} 
	Obviously, ${{\alpha _k} \ge {\gamma _k}}$ for any $k \ge 1.$
	
	3) For the case $a<1,$ we have 
	\begin{align}
	{\alpha _k} &= \frac{{{s_k} - 1}}{{{s_{k + 1}}}} = {{\left( {1 - \frac{1}{{k + a}}} \right)}^{a + 1}}\left( {1 + \frac{{\int_{k - 1}^k {\frac{{\ln x}}{{{x^{1 + a}}}}dx} }}{{\int_1^{k - 1} {\frac{{\ln x}}{{{x^{1 + a}}}}dx} }}} \right) - \frac{{\int_1^k {\frac{{\ln x}}{{{x^{1 + a}}}}dx} }}{{{{\left( {k + a} \right)}^{a + 1}}}} \label{C1} \\
	&\ge {{\left( {1 - \frac{1}{{k + a}}} \right)}^{a + 1}}\left( {1 + \frac{{\frac{{\ln k}}{{{k^{1 + a}}}}}}{{\int_1^{ + \infty } {\frac{{\ln x}}{{{x^{1 + a}}}}dx} }}} \right) - \frac{{\int_1^k {\frac{{\ln x}}{{{x^{1 + a}}}}dx} }}{{{{\left( {k + a} \right)}^{a + 1}}}} \nonumber \\
	&= {{\left( {1 - \frac{1}{{k + a}}} \right)}^{a + 1}}\left( {1 + \frac{{{a^2}\ln k}}{{{k^{1 + a}}}}} \right) - \frac{{\int_1^k {\frac{{\ln x}}{{{x^{1 + a}}}}dx} }}{{{{\left( {k + a} \right)}^{a + 1}}}},\;as\;k \to  + \infty.\nonumber
	\end{align}
	From ${\left( {1 - \frac{1}{{k + a}}} \right)^{a + 1}} = 1 - \frac{{a + 1}}{{k + a}} + \frac{{a\left( {a + 1} \right)}}{2}\frac{1}{{{{\left( {k + a} \right)}^2}}} + o\left( {\frac{1}{{{{\left( {k + a} \right)}^2}}}} \right),$ $\frac{1}{{{{\left( {k + a} \right)}^2}}} = o\left( {\frac{{\ln k}}{{{k^{1 + a}}}}} \right)$ and $\frac{{\int_1^k {\frac{{\ln x}}{{{x^{1 + a}}}}dx} }}{{{{\left( {k + a} \right)}^{a + 1}}}} = o\left( {\frac{{\ln k}}{{{k^{1 + a}}}}} \right)$, (\ref{C1}) can be deduced that 
	\[{\alpha _k} \ge {\gamma _k} + \frac{{k - 1}}{{k + a}}\frac{{{a^2}\ln k}}{{{k^{1 + a}}}} + o\left( {\frac{{\ln k}}{{{k^{1 + a}}}}} \right),\]
	which implies that $\mathop {\lim }\limits_{k \to \infty } \frac{{{\alpha _k} - {\gamma _k}}}{{\frac{{\ln k}}{{{k^{1 + a}}}}}} \ge {a^2} > 0,$ i.e., ${\alpha _k} \ge {\gamma _k}$ for $k$ is large sufficiently.
	
	Hence, $\gamma _k \le \alpha _k$ holds for $k$ is large sufficiently. Since that ${s_k} = O\left( {{k^{a + 1}}} \right),$ we conclude the result 1) from Theorem \ref{L1} and $\left\{ {{x_k}} \right\}$ converges strongly to $\bar x \in {X }.$
	Further, similar with the proof of Corollary \ref{C3.3}, result 2) holds. 
\end{proof}

\emph{Remark 6.} We see that $t_k$ in case 2 is the $t_k$ proposed in FISTA\_CD \citep[see,][]{Convea} but with a wider scope of $a.$ Corollary \ref{T3.2} shows that the convergence rates of IFB with $t_k$ in Case 2 related to the value of $a.$

Notice that both of the convergence results in Corollary \ref{C3.5} and Corollary \ref{C2.3} enjoy sublinear convergence rate of $o\left( {\frac{1}{{{k^p}}}} \right)$ for any $p>1.$ Here,
We give a further analysis for the convergence rate of the IFB from another aspect. From our Assumption $A_2,$ we can derive that 
\[{\gamma _k} = 1 - \frac{c}{{{k^\sigma}}} + o\left( {\frac{1}{{{k^\sigma}}}} \right) - \frac{1}{{{t_{k + 1}}}}.\] 
For ${t_k}$ in Case 1, we have ${\gamma _k} = 1 - \frac{\alpha}{{{k^{1 - \alpha }}}} + o\left( {\frac{1}{{{k^{1 - \alpha }}}}} \right);$ For ${t_{k}}$ in Case 3, we have corresponding ${\gamma _k} = 1 - \frac{a}{{{k^r}}} + o\left( {\frac{1}{{{k^r}}}} \right).$ 
Obviously, these two ${\gamma _k}$ are of the similar magnitude, in particular, they should be of the same order if we choose $r=0.5,$ $a=0.5,$ and $\alpha=0.5,$ theoretically. Thus, it's reasonable that the corresponding IFBs have similar numerical experiments. Numerical results in Section 4 can confirm this conclusion.

\section{Inertial Forward-Backward Algorithm with an Adaptive Modification}
 
For solving the problem ($P$), the authors in \citet{wenbo} showed that under the error bound condition, the sequences $\left\{ {{x_k}} \right\}$ and $\left\{ {F\left( {{x_k}} \right)} \right\}$ generated by FISTA with fixed restart are $R$-linearly convergent. In \citet{Restart}, an adaptive scheme for FISTA were proposed and enjoyed global linear convergence of the objective values when applying this method to problem ($P$) with $f$ being strongly convex and $g = 0.$ And the authors stated that after a certain number of iterations, adaptive restarting may provide linear convergence for Lasso, while they didn't prove similar results for general nonsmooth convex problem ($P$). In this section, we will explain that Inertial forward-backward algorithm with an adaptive modification enjoys same convergence results as we proved in Section 2 and Section 3. The adaptive modification scheme is described below:
\begin{algorithm}[tbhp]
	\caption{ Inertial forward-backward algorithm with an adaptive modification (IFB\_AdapM) }		
	\hspace*{0.2cm} \textbf{Step 0.} Take ${y_1} = {x_0} \in {R^n},{t_1} = 1.$ Input $\lambda  = \frac{\mu }{{{L_f}}},$ where ${\mu } \in \left] {{\rm{0}},{\rm{1}}} \right[$.\\
	\hspace*{0.7cm} \textbf{Step k.}  Compute \\
	\hspace*{3cm} ${x_k} = {T_\lambda }\left( {{y_k}} \right) = {\rm{pro}}{{\rm{x}}_{\lambda g}}\left( {{y_k} - \lambda \nabla f\left( {{y_k}} \right)} \right)$\\
	\hspace*{3.3cm} ${y_{k + 1}} = {x_k} + {\gamma _k}\left( {{x_k} - {x_{k - 1}}} \right),$\\
	\hspace*{1.95cm}where $\left\{ \begin{array}{l}
		{\gamma _k} = 0,\;{\rm{if}}\;{\left( {{y_k} - {x_k}} \right)^T}\left( {{x_k} - {x_{k - 1}}} \right) > 0\;{\rm{or}}\;F\left( {{x_k}} \right) > F\left( {{x_{k - 1}}} \right),\\
		{\gamma _k} = \frac{{{t_k} - 1}}{{{t_{k + 1}}}},\;{\rm{otherwise}.}
	\end{array} \right.$	
\end{algorithm}

Note that the adaptive modification condition is same with the adpative restart scheme in \citet{Restart}. Here, we call the condition ${\left( {{y_k} - {x_k}} \right)^T}\left( {{x_k} - {x_{k - 1}}} \right) > 0$ as gradient modification scheme and call $F\left( {{x_k}} \right) > F\left( {{x_{k - 1}}} \right)$ as function modification scheme. While, unlike the restart strategy setting ${t_k} = 0$ every time the restart condition holds to make the update of mumentum restarts from 0, Algorithm 2 sets the momentum back to 0 (Called adaptive modification step) at the current iteration but don't interrupt the update of mumentum. Based on Theorem 2.2 and the fact ${\gamma_k} = 0 \le {\alpha _k} = \frac{{{s_k} - 1}}{{{s_{k + 1}}}},$ we can obtain the same convergence rates for the function values and iterates of Algorithm 2. Specifically, Algorithm 2 with ${t_k} = {e^{{{\left( {k - 1} \right)}^\alpha }}}\left( {0 < \alpha  < 1} \right)$ or ${t_k} = \frac{{{k^r} - 1 + a}}{a}\;\left( {r < 1,a > 0} \right)$ converges with any sublinear rate of type $\frac{1}{{{k^p}}}$ and the corresponding numerical performances compare favourably with FISTA equipped with the fixed restart scheme or both the fixed and adaptive restart schemes, which has $R$-linearly convergence rate (See the numerical experiments in Section 5).

\section{Numerical Experiments}

In this section, we conduct numerical experiments to study the numerical performance of IFB with different options of $t_k$ and to verify our theoretical results.
The codes are available at \url{https://github.com/TingWang7640/Paper_EB.git}

\textbf{LASSO} We first consider the LASSO
\begin{align}\label{O49}
\mathop {\min }\limits_{x \in {R^n}} F\left( x \right) = \frac{1}{2}{\left\| {Ax - b} \right\|^2} + \delta  {\left\| x \right\|_1}.
\end{align}

We generate an $A \in {R^{m \times n}}$ be a Gaussian matrix and randomly generate
a $s-$sparse vector $\hat x$ and set $b = A\hat x + 0.5\varepsilon ,$ where $\varepsilon$ has standard i.i.d. Gaussian entries. And set $\delta  = 1.$ We observe that (\ref{O49}) is in the form of problem ($P$) with $f\left( x \right) = \frac{1}{2}{\left\| {Ax - b} \right\|^2}$ and $g\left( x \right) = \delta {\left\| x \right\|_1}.$ It is clear that $f$ has a Lipschitz continuous gradient and ${L_f} = {\lambda _{\max }}\left( {{A^T}A} \right).$ Moreover, we can observe that (\ref{O49}) satisfies the local error bound condition based on the third example in Introduction with $h\left( x \right) = \frac{1}{2}{\left\| x \right\|^2}$ and $c =  - {A^T}b.$
We terminate the algorithms once $ \left\| {\partial F\left( {{x_k}} \right)} \right\| < {10^{ - 8}}  .$ 

Considering Corollary \ref{C3.3}. We know that in theory, the rate of convergence of IFB with $t_k$ in Case 2 should improve constantly as $r$ increasing. In the Fig.1, we test four choices of $r,$ which is $r=2,$ $r=4,$ $r=6$ and $r=8,$ to show the same result in experiments as in theory. Denote that the IFB with ${t_{k }} = \frac{{{k^r} - 1 + a}}{a} $ is called as ``FISTA\_pow(r)''. Here we set $a=4.$ And the constant stepsize is $\lambda = \frac{{0.98}}{{{L_f}}}.$ 
\begin{figure}[H]
	\centering
	\subfloat
	{
		\includegraphics[width=6.0cm]{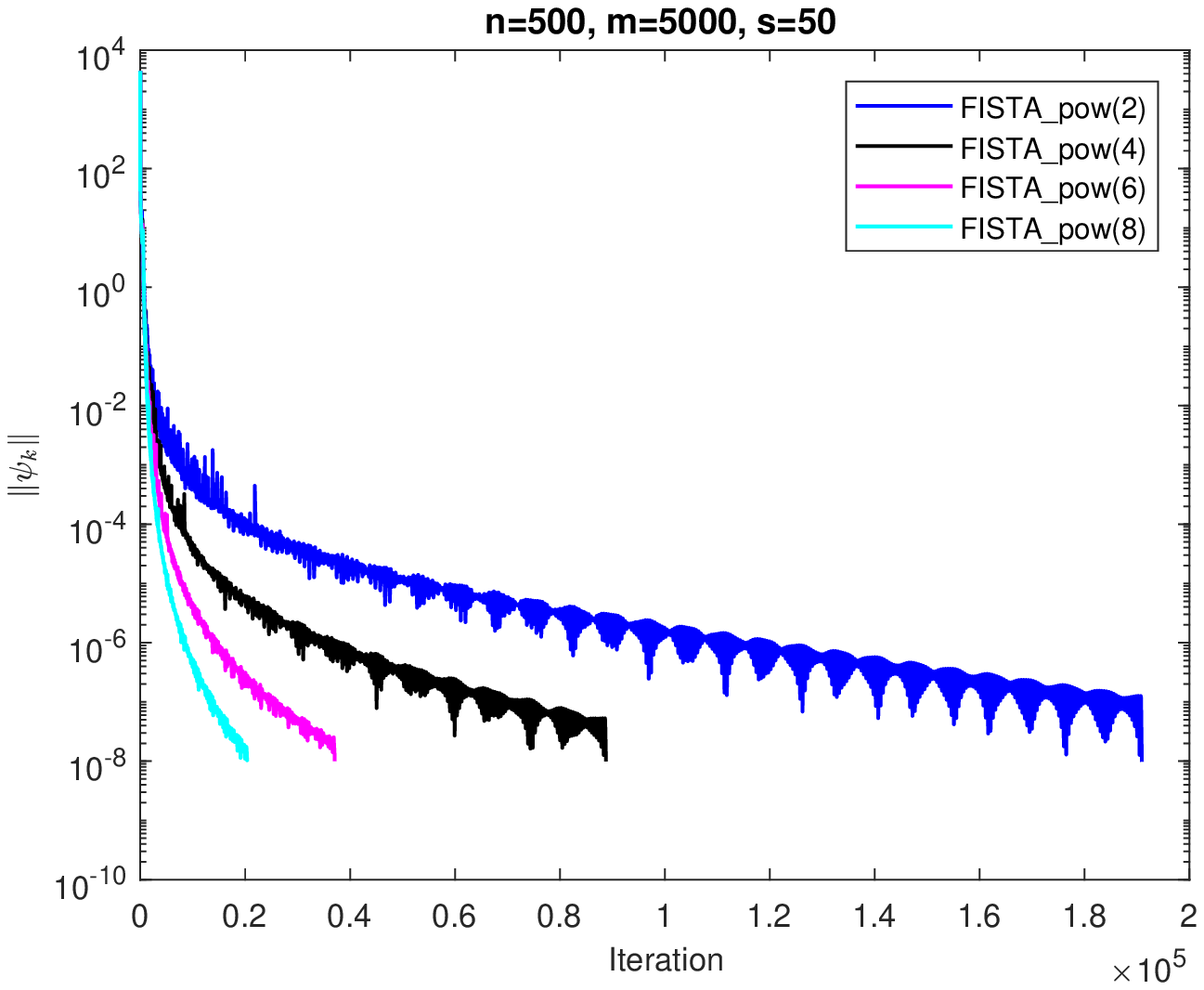}	
	}
	\subfloat
	{
		\includegraphics[width=6.0cm]{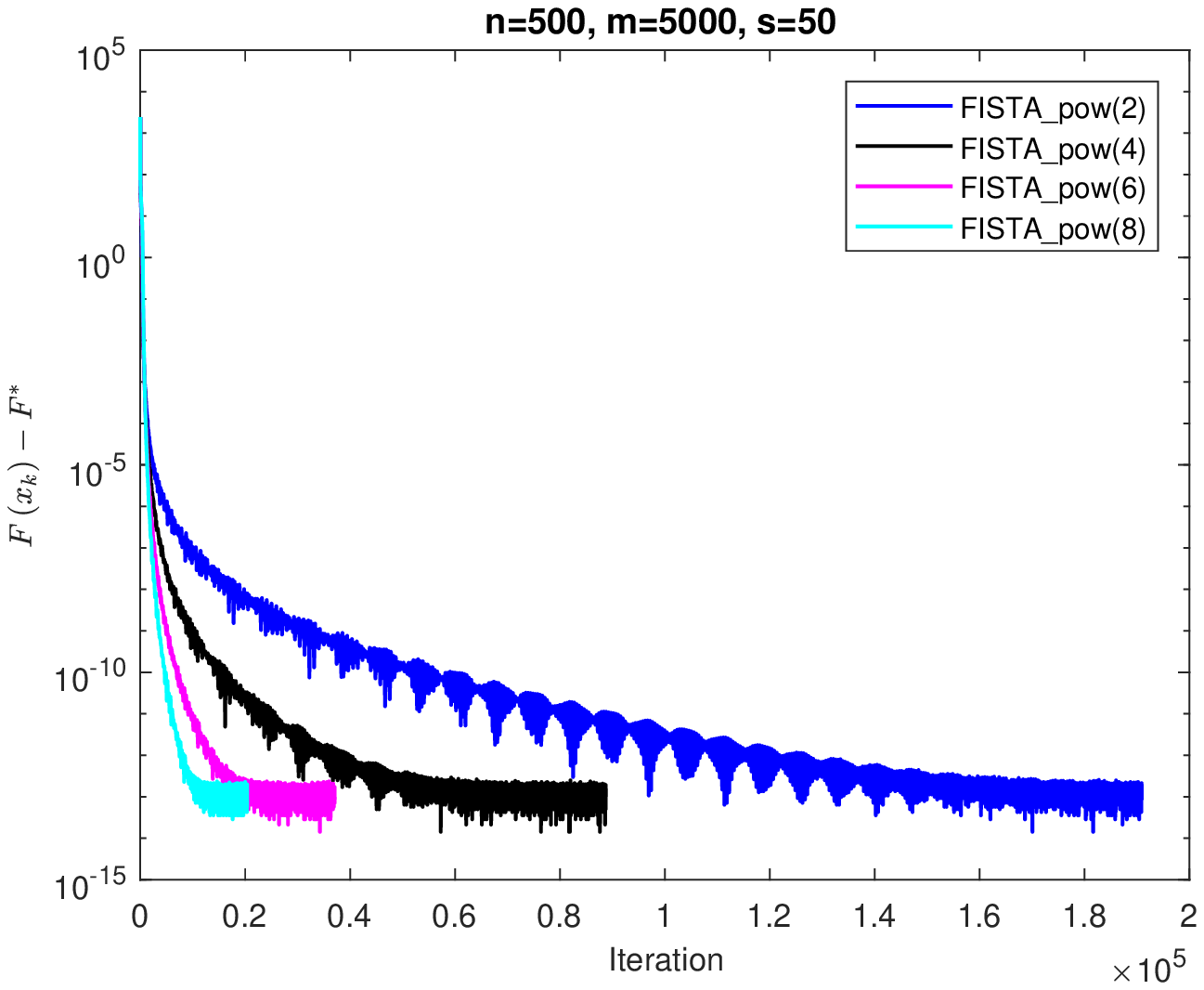}	
	}
	\caption{Computational results for the convergence of $\left\| {{\psi _k}} \right\| $ and $\left( {F\left( {{x_k}} \right) - F^*} \right).$}
	\label{fig:1}	
\end{figure}

In Corollary \ref{T3.2}, we show that the convergence rate of corresponding IFB greatly related to the value of $a.$ In the Fig.2, we test four choices of $a,$ which is $a=4,$ $a=6,$ $a=8$ and $a=10,$ to verify our theoretical results. Set $\lambda  = \frac{0.98}{{{L_f}}}.$
\begin{figure}[H]
	\centering
	\subfloat
	{
		\includegraphics[width=6.0cm]{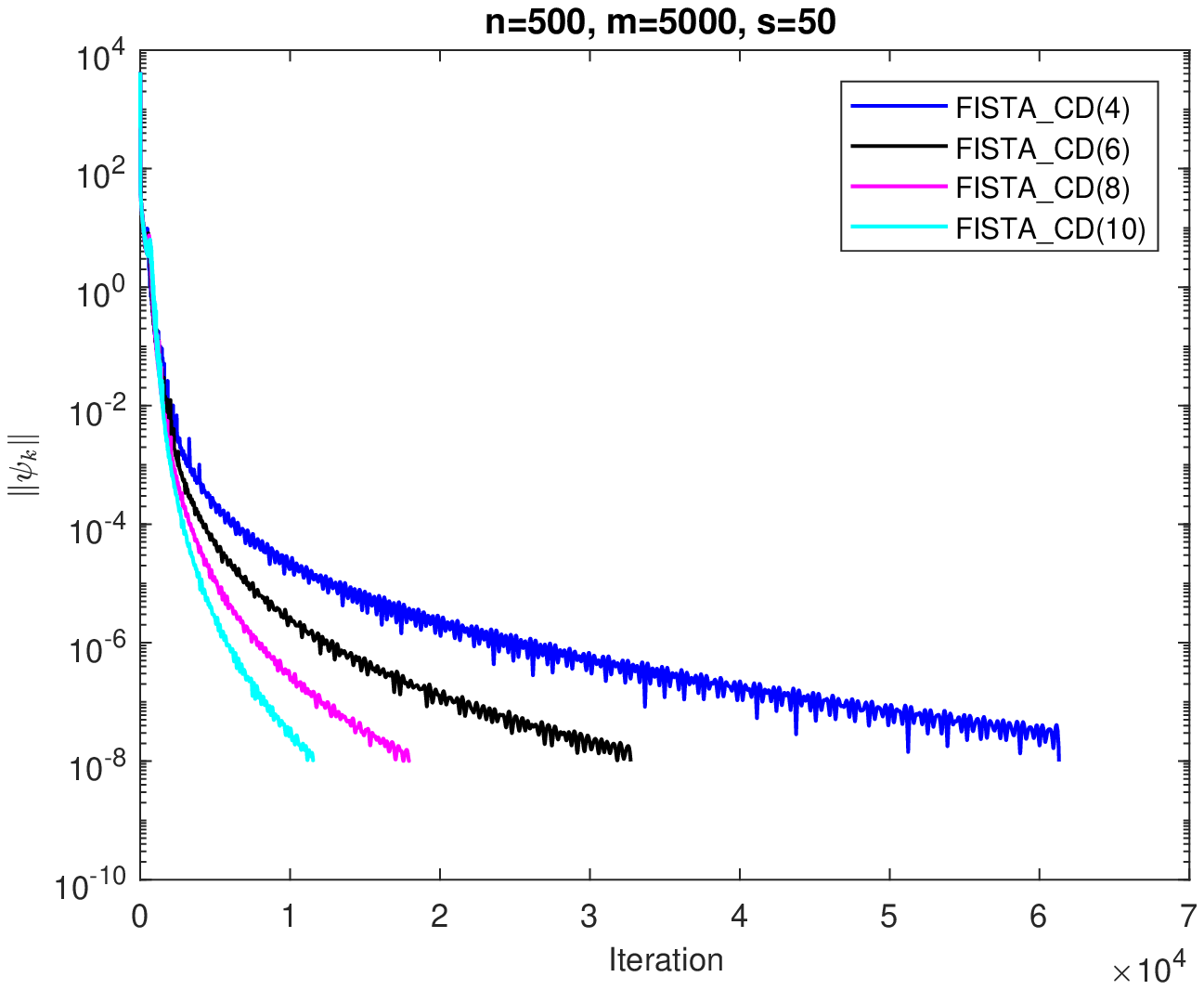}	
	}
	\subfloat
	{
		\includegraphics[width=6.0cm]{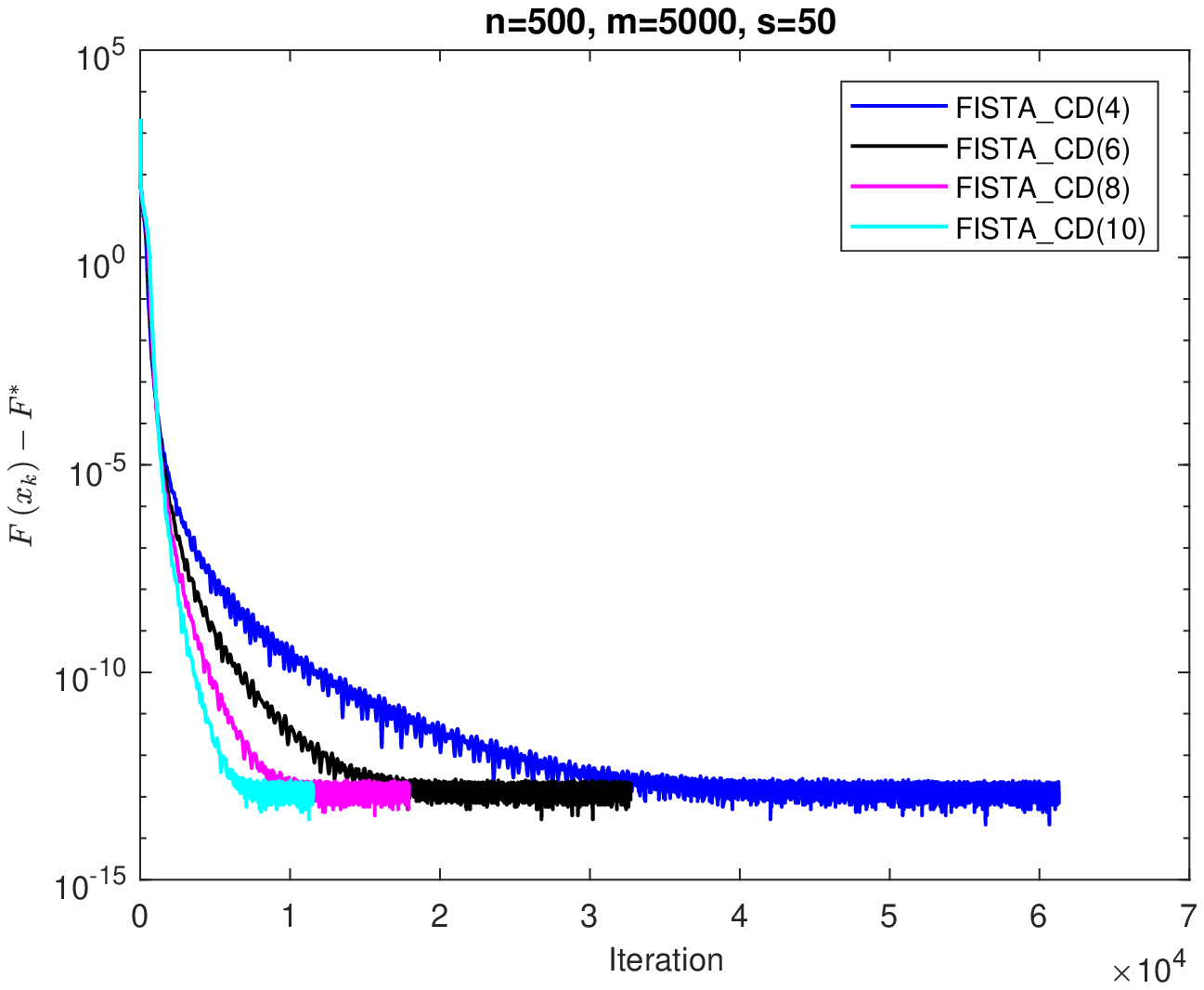}	
	}
	\caption{Computational results for the convergence of $\left\| {{\psi _k}} \right\| $ and $\left( {F\left( {{x_k}} \right) - F^*} \right).$}
	\label{fig:2}	
\end{figure}

Now, we perform numerical experiments to study the IFB with five choices of $t_k.$ Notice that the IFBs with $t_k$ discussed in Case 1 and Case 3 enjoy the rates of convergence better than any order of convergence rate, and in the end of last section, we emphasize that these two IFBs should achieve almost the same numerical experiments if we set the related parameters as $r=0.5,$ $a=0.5,$ and $\alpha=0.5.$ Hence, we consider the following algorithms:\\
1) FISTA; \\
2) FISTA\_CD with $a = 4$; \\
3) FISTA\_pow(8), i.e., the IFB with ${t_{k }} = \frac{{{k^r} - 1 + a}}{a}  \; \left( {r = 8 \; {\rm{and}} \; a = 4} \right)$.\\
4) FISTA\_pow(0.5), i.e., the IFB with ${t_{k }} = \frac{{{k^r} - 1 + a}}{a}  \; \left( {r = 0.5 \; {\rm{and}} \; a = 0.5} \right)$.\\
5) FISTA\_exp, i.e., the IFB with ${t_k} = {e^{{{\left( {k - 1} \right)}^\alpha }}},0 < \alpha  < 1.$ And set $\alpha = 0.5.$

Set $\lambda = \frac{{0.98}}{{{L_f}}}.$ 
\begin{figure}[tbhp]
	\centering
	\subfloat
	{
		\includegraphics[width=6.0cm]{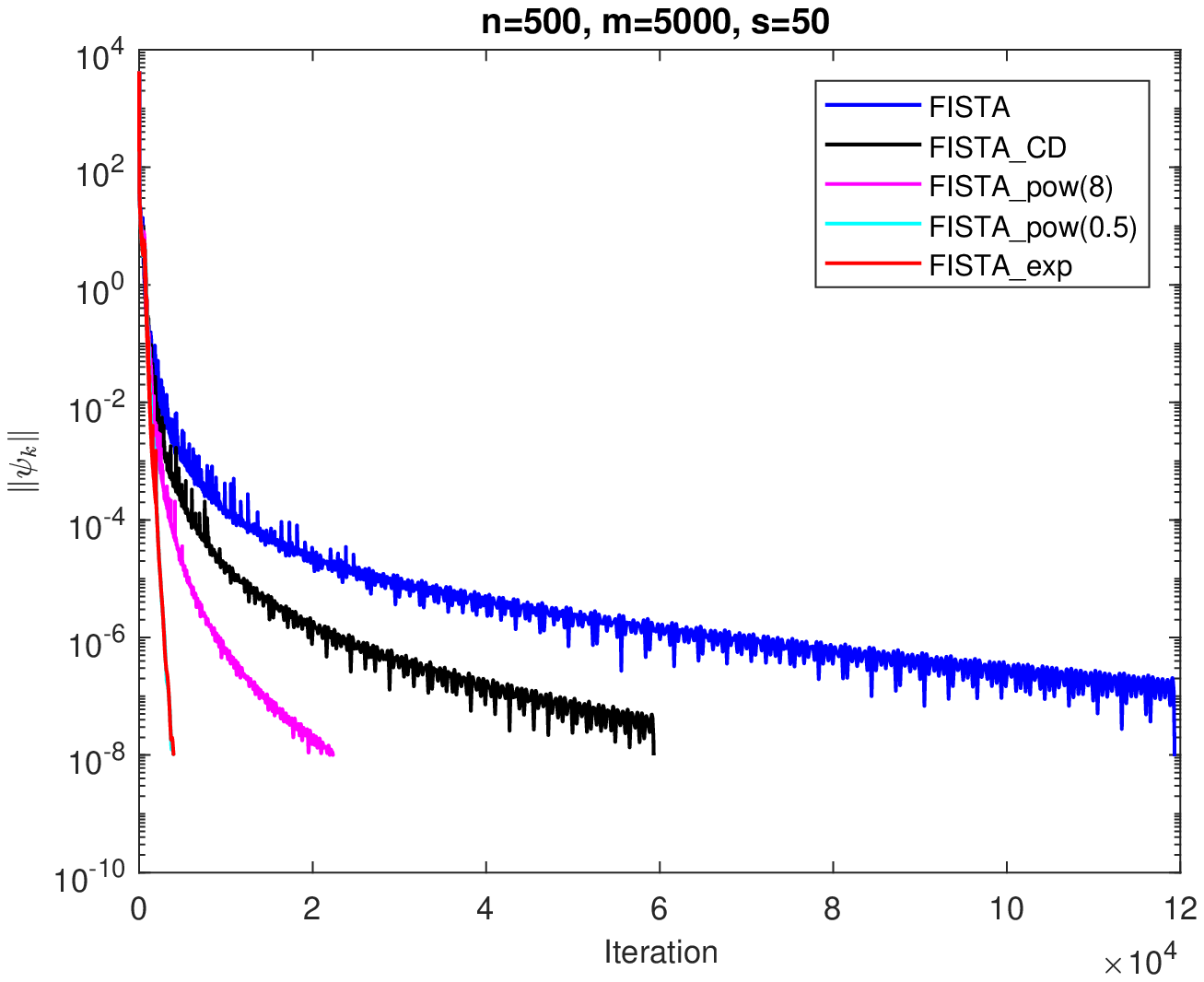}	
	}
	\subfloat
	{
		\includegraphics[width=6.0cm]{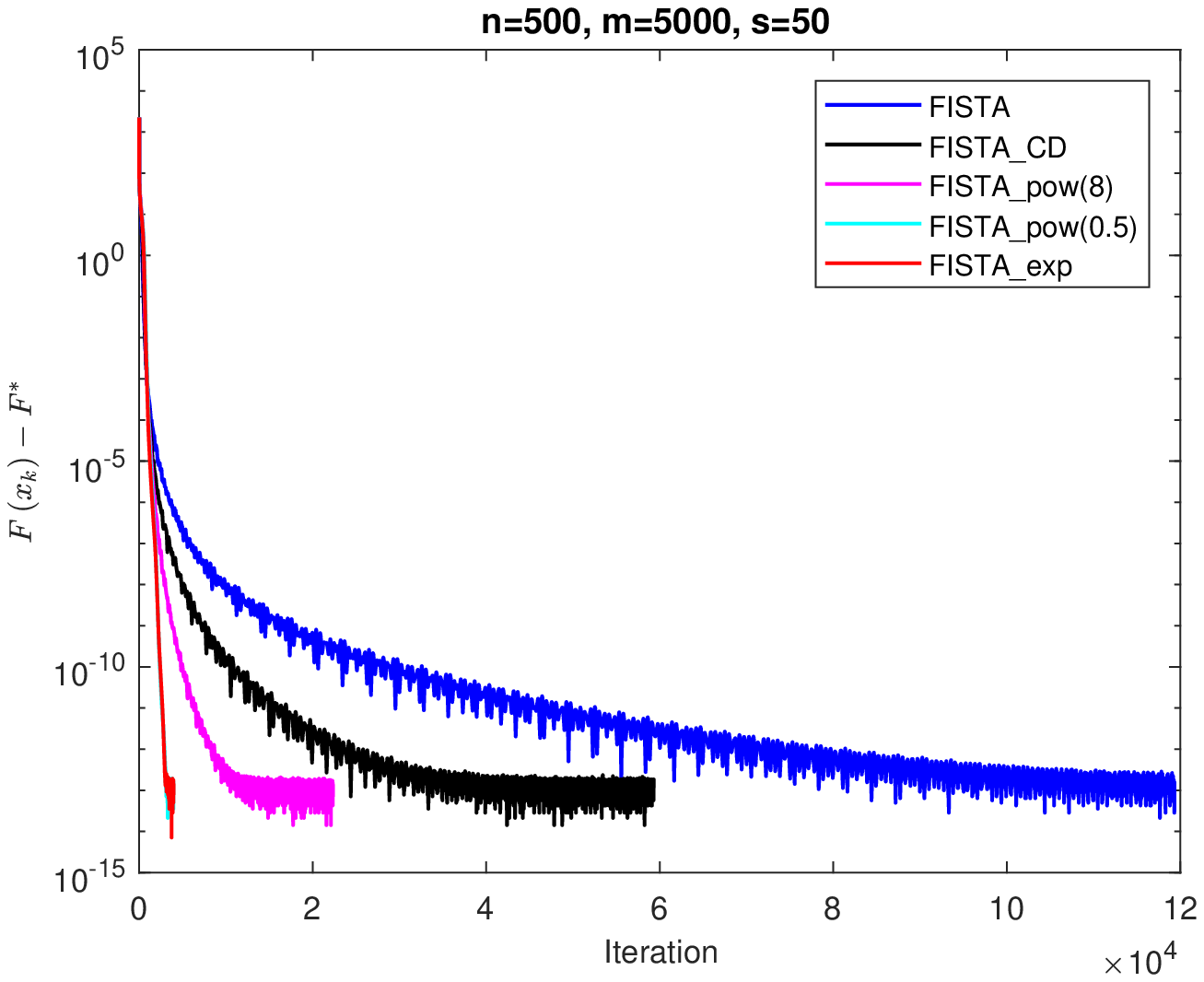}	
	}
	\caption{Computational results for the convergence of $\left\| {{\psi _k}} \right\| $ and $\left( {F\left( {{x_k}} \right) - F^*} \right).$}
	\label{fig:3}	
\end{figure}
Our computational results are presented in Fig.3.
We see that FISTA\_exp and FISTA\_pow(0.5) cost many fewer steps than FISTA\_CD and FISTA, and faster than FISTA\_pow(8). This results are same as the theoretical analyses in Section 3. And we see that the two lines of FISTA\_exp and FISTA\_pow(0.5) almost coincide, here, we give the detail number of iterations: for FISTA\_exp, it's number of iteration is 3948, and for FISTA\_pow(0.5), it's number of iteration is 3964, which \textcolor{red}{confirms} our theoretical analysis.  

\textbf{Sparse Logistic Regression.} We also consider the sparse logistic regression with the $l_1$ regularized, that is 
\begin{align}\label{O50}
\mathop {\min }\limits_x \frac{1}{n}\sum\limits_{i = 1}^n {\log \left( {1 + \exp \left( { - {l_i}\left\langle {{h_i},x} \right\rangle } \right)} \right)}  + \delta \left\| {{x}} \right\|_1,
\end{align}
where ${h_i} \in {R^m},$ ${l_i} \in \left\{ { - 1,1} \right\},i = 1, \cdots n.$
Define ${K_{ij}} =  - {l_i}{h_{ij}}$ and ${L_f} = \frac{4}{n}\left\| {{K^T}K} \right\|.$ It satisfies the local error bound condition since the third example in Introduction with $h\left( x \right) = \frac{1}{n}\sum\limits_{i = 1}^n {\log \left( {1 + \exp \left( {{x_i}} \right)} \right)} $ and $A=K,$ $c = 0.$
Set $\delta  = 1.e - 2.$ We take three datasets ``w4a'', ``a9a'' and ``sonar'' from LIBSVM \citep[see,][]{LIBSVM}. And the computational results relative to the number of iterations are reported in following Table 1.
\begin{table}[H]
	\caption{Comparison of the number of iterations}
	\centering
	\begin{tabular}{cccccc}
		\hline
		& FISTA & FISTA\_CD & FISTA\_pow(8) & FISTA\_pow(0.5) & FISTA\_exp \\ \hline
		`` w4a ''   & 1147  & 760       & 544           & 510             & 548        \\ \hline
		`` a9a ''   & 2049  & 1289      & 757           & 623             & 714        \\ \hline
		`` sonar '' & 8405  & 3406      & 1586          & 922             & 980        \\ \hline
	\end{tabular}
\end{table}
We see from Table 1 that FISTA\_exp, FISTA\_pow(0.5) and FISTA\_pow(8) outperform FISTA and FISTA\_CD and the numerical results are consistent with the theoretical ones. 

\textbf{Strong convex quadratic programming with simplex constraints.}
\[\mathop {\min }\limits_{x \in \left[ {sl,su} \right]} \frac{1}{2}{x^T}Ax + {b^T}x,\]
where $A \in {R^{m \times m}}$ is a symmetric positive definite matrix  generated by $A = {B^T}B + sI$ where $B \in {R^{\frac{m}{2} \times m}}$ with i.i.d. standard Gaussian entries and $s$ chosen uniformly at random from $\left[ {0,1} \right]$. The vector $b \in {R^m}$ is generated with i.i.d. standard Gaussian entries. Set $su$ = ones(m,1) and $sl$ = -ones(m,1). Notice that $f\left( x \right) = \frac{1}{2}{x^T}Ax + {b^T}x$ with ${\mu _f} = {\lambda _{\min }}\left( A \right)$ and ${L_f} = {\lambda _{\max }}\left( A \right),$ and $g\left( x \right) = {\delta _{\left[ {sl,su} \right]}}\left( x \right).$ Here, we terminate the algorithms once $ \left\| {\partial F\left( {{x_k}} \right)} \right\| < {10^{ - 6}}  .$ 

Now we perform numerical experiments with (1) Forward-Backward method without inertial (FB); (2) FISTA with fixed and adaptive restart schemes (FISTA\_R); (3) IFB with ${\beta ^ * } := \frac{{\sqrt {{\mu _f}}  - \sqrt {{L_f}} }}{{\sqrt {{\mu _f}}  + \sqrt {{L_f}} }}$ (IFB\_${\beta ^ * }$); (4) FISTA\_exp; (5) Algorithm 2 (Gradient scheme) with ${t_k} = {e^{\sqrt {k - 1} }}$ (IFB\_AdapM\_exp).

According to Corollary 3.1, we know that $o\left( {\frac{1}{{{k^p}}}} \right)$ sublinearly convergence rate for the sequences $\left\{ {{x_k}} \right\}$ and $\left\{ {F\left( {{x_k}} \right)} \right\}$ generated by algorithms FISTA\_exp and IFB\_AdapM\_exp, which slower than $R$-linear convergence for algorithms FB, FISTA\_R and IFB\_${\beta ^ * }$ from a theoretical point of view. However, we can see from Fig.\ref{fig:4} and Fig.\ref{fig:5} that IFB\_AdapM\_exp always has better performance than FISTA\_exp and sometimes, IFB\_AdapM\_exp performs better than other four algorithms and FISTA\_exp performs similar with IFB\_${\beta ^ * }$ but don't require the strong convex parameter.  Consequently, although the linear convergence rate is not reached, FISTA\_exp and IFB\_AdapM\_exp still have good numerical performances, and this adaptive modification scheme can significantly improve the convergence speed of IFB.

\begin{figure}[tbhp]
	\centering
	\subfloat
	{
		\includegraphics[width=6.0cm]{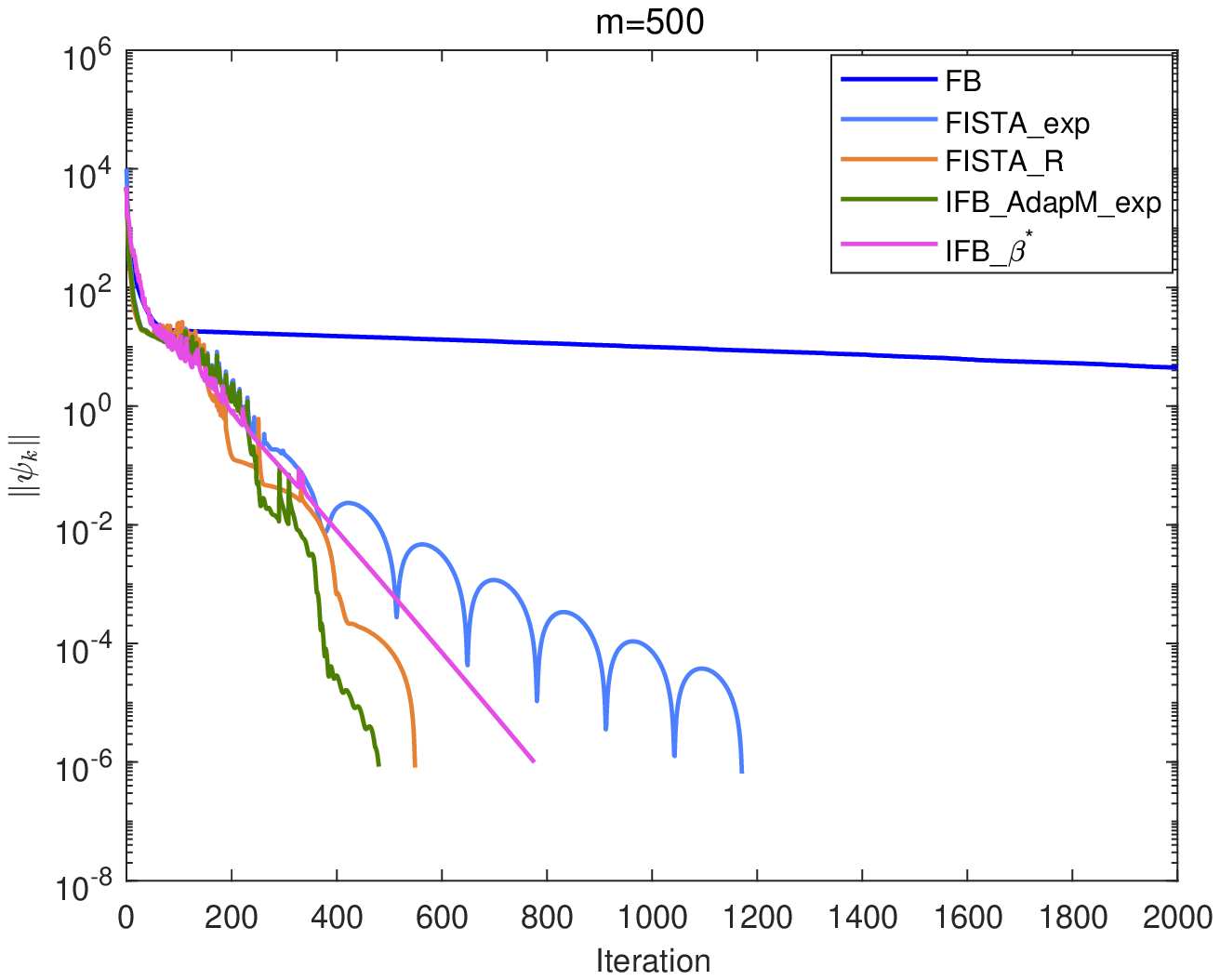}	
	}
	\subfloat
	{
		\includegraphics[width=6.0cm]{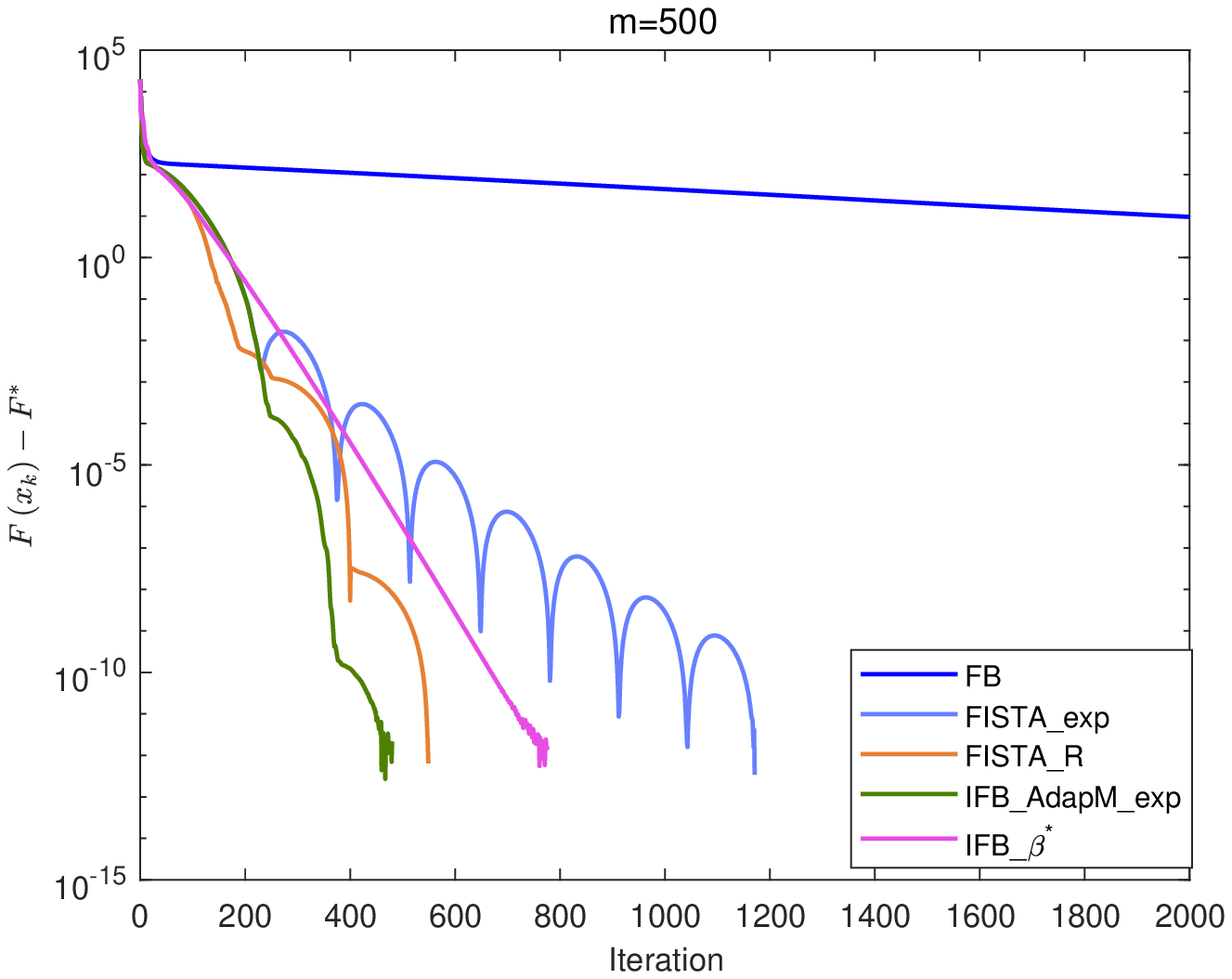}	
	}
	\caption{Computational results for the convergence of $\left\| {{\psi _k}} \right\| $ and $\left( {F\left( {{x_k}} \right) - F^*} \right).$}
	\label{fig:4}	
\end{figure}
\begin{figure}[tbhp]
	\centering
	\subfloat
	{
		\includegraphics[width=6.0cm]{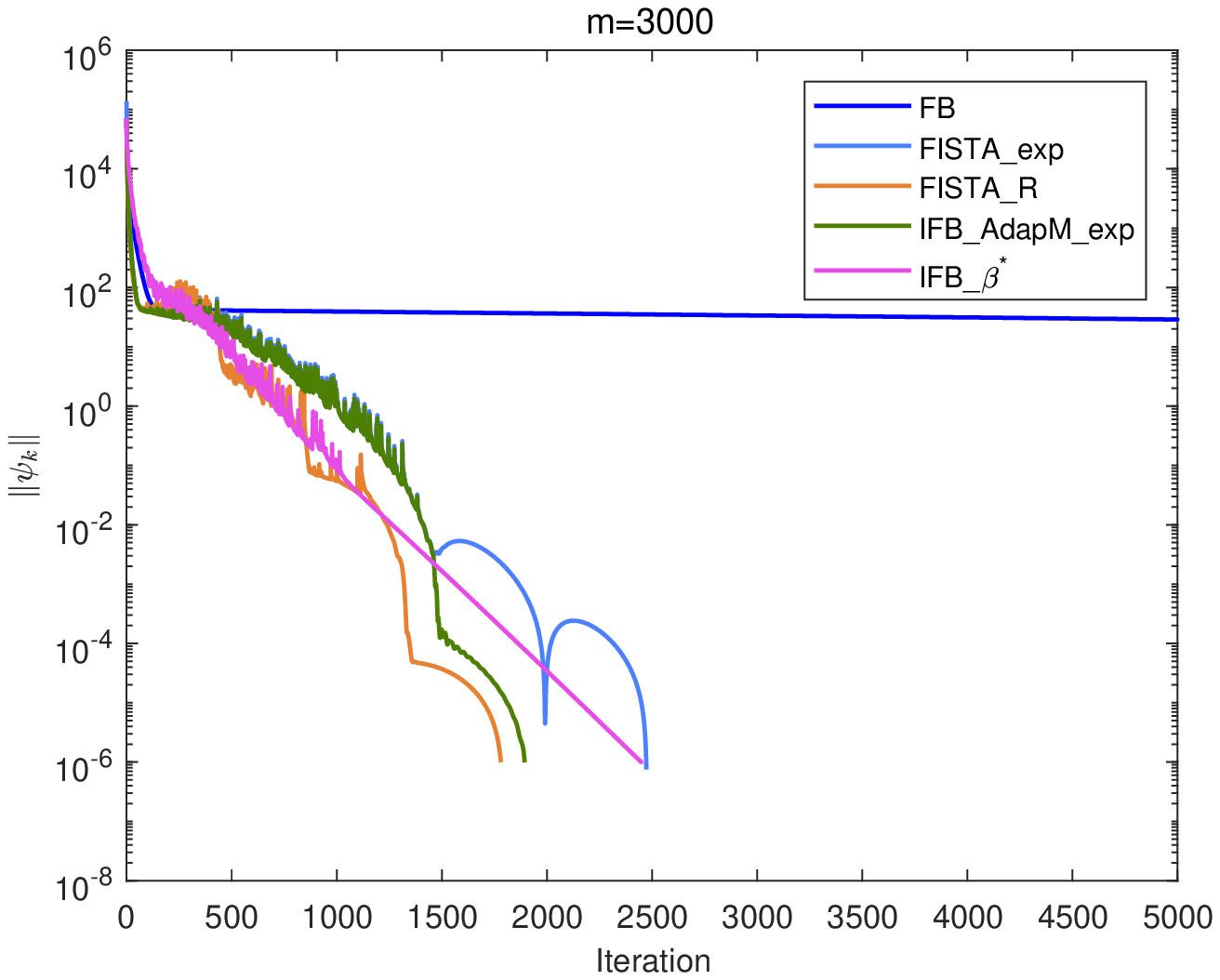}	
	}
	\subfloat
	{
		\includegraphics[width=6.0cm]{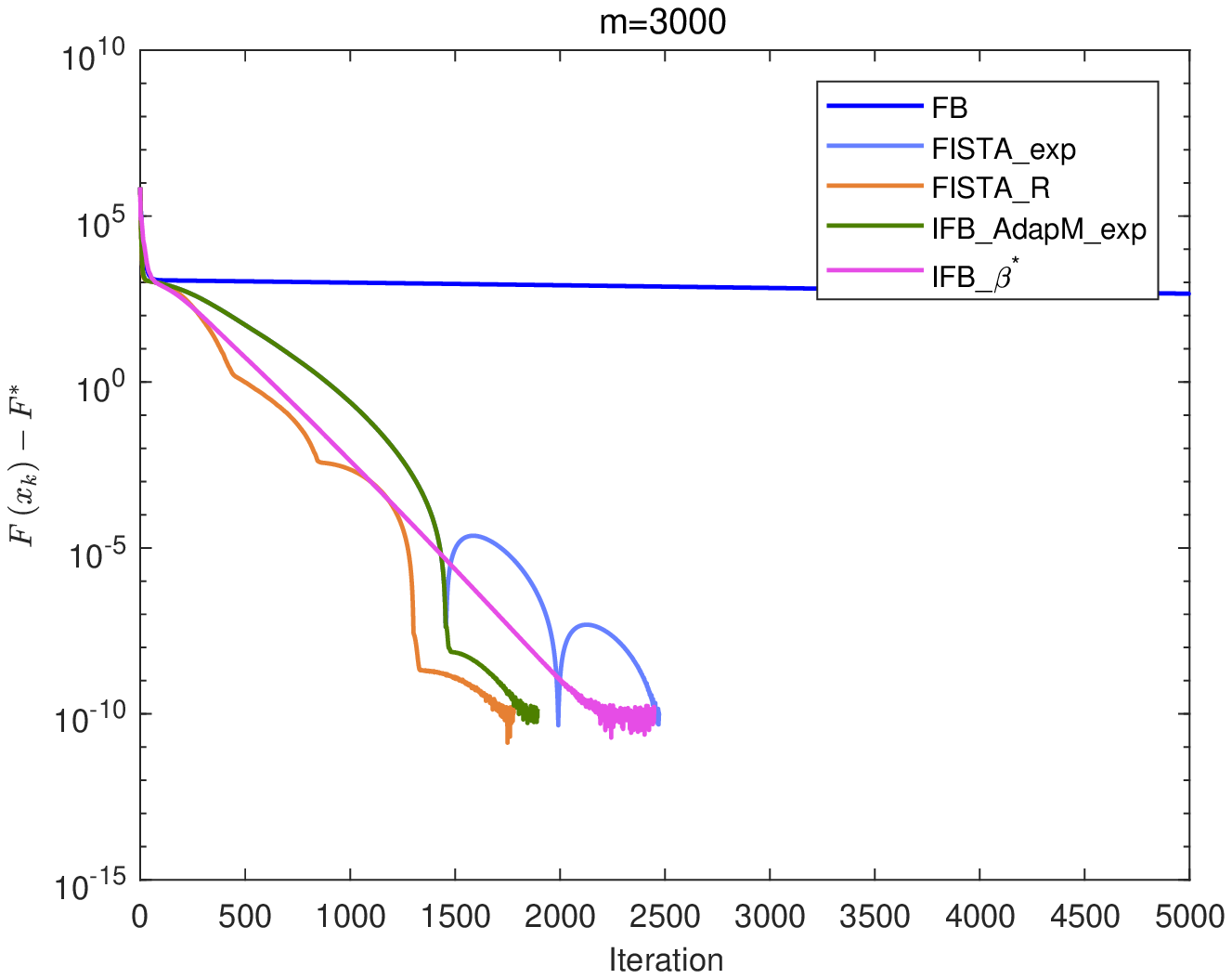}	
	}
	\caption{Computational results for the convergence of $\left\| {{\psi _k}} \right\| $ and $\left( {F\left( {{x_k}} \right) - F^*} \right).$}
	\label{fig:5}	
\end{figure}
\section{Conclusion}
In this paper, under the local error bound condition, we study the convergence results of IFBs with a class of abstract $t_k$ satisfying the assumption $A_2$ for solving the problem ($P$). We use a new method called ``comparison method'' to discuss the improved convergence rates of function values and sublinear rates of convergence of iterates generated by the IFBs with six choices of $t_k.$ In particular, we show that, under the local error bound condition, the strong convergence of iterates generated by the original FISTA can be established, the convergence rate of FISTA\_CD is actually related to the value of $a,$ and the sublinear convergence rates for both of function values and iterates generated by IFBs with $t_k$ in Case 1 and Case 3 can achieve $o\left( {\frac{1}{{{k^p}}}} \right)$ for any positive integer $p>1.$ 
Specifically, our results still hold for IFBs with an adaptive modification scheme.

\section*{Acknowledgements}

The work was supported by the National Natural Science Foundation of China (No.11901561), the Natural Science Foundation of Guangxi (No.2018GXNSFBA281180) and the Postdoctoral Fund Project of China (Grant No.2019M660833).

\clearpage

\appendix

\section{Proof of Lemma 2.4}
\label{A}
\begin{proof}	
	Assume by contradiction that $\mathop {\lim \inf }\limits_{k \to \infty } {s_k} = l <  + \infty .$ Notice that $l\ge 0$ since that $\left\{ {{s_k}} \right\}$ is a nonnegative sequence. Then, there exists a subsequence $\left\{ {{s_{{k_j}}}} \right\}$ such that \textcolor{red}{$\mathop {\lim }\limits_{j \to \infty } {s_{{k_j}}} = l.$}
	By the condition ${\alpha _k} = \frac{{{s_k} - 1}}{{{s_{k + 1}}}} \ge {\gamma _k},$ we have $\frac{{{s_{{k_j}}} - 1}}{{{s_{{k_j} + 1}}}} \ge {\gamma _{{k_j}}},$ then, combining with the fact that $\mathop {\lim }\limits_{k \to \infty } {\gamma _k} = 1$ from Remark 3, we deduce that 
	\textcolor{red}{\[\mathop {\lim \sup }\limits_{j \to \infty } {s_{{k_j} + 1}} \le \mathop {\lim }\limits_{j \to \infty } \frac{{{s_{{k_j}}} - 1}}{{{\gamma _{{k_j}}}}} = l - 1,\]} which leads to a contradiction that \textcolor{red}{$l = \mathop {\lim \inf }\limits_{k \to \infty } {s_k} \le \mathop {\lim \inf }\limits_{j \to \infty } {s_{{k_j} + 1}} \le \mathop {\lim \sup }\limits_{j \to \infty } {s_{{k_j} + 1}} \le l - 1.$} Hence, $\mathop {\lim }\limits_{k \to \infty } {s_k} =  + \infty .$
	
	Further, by the condition ${\alpha _k} = \frac{{{s_k} - 1}}{{{s_{k + 1}}}} \ge {\gamma _k},$ we get $\frac{{{s_{k + 1}}}}{{{s_k}}} \le \frac{1}{{{\gamma _k} + \frac{1}{{{s_{k + 1}}}}}}.$ Combining with $\mathop {\lim }\limits_{k \to \infty } {\gamma _k} = 1$ and $\mathop {\lim }\limits_{k \to \infty } {s_k} =  + \infty ,$ we obtain that $\mathop {\lim \sup }\limits_{k \to \infty } \frac{{{s_{k + 1}}}}{{{s_k}}} \le 1.$ Since that $\left\{ {{s_k}} \right\}$ is a nonnegative subsequence, we have $\mathop {\lim \sup }\limits_{k \to \infty } {\left( {\frac{{{s_{k + 1}}}}{{{s_k}}}} \right)^2} \le 1,$ which leads to the result that 
	\[\mathop {\lim \sup }\limits_{k \to \infty } \frac{{s_{k + 1}^2 - s_k^2}}{{s_k^2}} = \mathop {\lim \sup }\limits_{k \to \infty } {\left( {\frac{{{s_{k + 1}}}}{{{s_k}}}} \right)^2} - 1 \le 0.\]
\end{proof}

\end{document}